\providecommand{\href}[2]{#2}
\providecommand*{\backref}{}
\providecommand*{\backrefalt}{}
\renewcommand*{\backref}[1]{}
\renewcommand*{\backrefalt}[4]{%
	\ifcase #1 %
	\or
	  Cited page~#2.
	\else
	  Cited pages~#2.
	\fi
}
\newcommand{\boF}{\mathcal{F}}
\newcommand{\E}{\mathbf{E}}
\newcommand{\PP}{\mathbf{P}}
\newcommand{\I}{\mathbb{I}}
\newcommand{\N}{\mathbb{N}}
\newcommand{\Z}{\mathbb{Z}}
\newcommand{\R}{\mathbb{R}}
\newcommand{\F}{\mathbb{F}}
\newcommand{\dd}{\mathop{}\!\mathrm{d}}
\newcommand{\norm}[1]{\left\| #1 \right\|}
\newcommand{\st}{\::\:}
\newcommand{\given}{\:\mid\:}
\DeclareMathOperator{\argth}{artanh}
\DeclareMathOperator{\supp}{supp}
\DeclareMathOperator{\dist}{dist}
\DeclareMathOperator{\bnd}{bnd}
\newcommand{\abs}[1]{\left|#1\right|}
\newcommand{\lgth}{\abs}
\newcommand{\pireg}{\pi_{\mathrm{reg}}}
\newcommand{\coloneqq}{\mathrel{\mathop:}=}
\renewcommand{\leq}{\leqslant}
\renewcommand{\geq}{\geqslant}
\renewcommand{\epsilon}{\varepsilon}
\renewcommand{\phi}{\varphi}
\newtheorem{thm}{Theorem}[section]
\newtheorem{prop}[thm]{Proposition}
\newtheorem{lem}[thm]{Lemma}
\newtheorem{cor}[thm]{Corollary}
\newtheorem*{prop*}{Proposition}
\newtheorem*{cor*1}{Corollary~\ref{characterize}}
\newtheorem*{cor*2}{Corollary~\ref{one-ended}}
\theoremstyle{definition}
\newtheorem{example}[thm]{Example}
\newtheorem{rmk}[thm]{Remark}
\numberwithin{equation}{section}
\title{Sharp lower bounds for the asymptotic entropy of symmetric random walks}
\author{S\'ebastien Gou\"ezel, Fr\'ed\'eric Math\'eus, Fran\c{c}ois Maucourant}
\address{S\'ebastien Gou\"ezel, IRMAR, Universit\'e Rennes 1,
35042 Rennes Cedex, France} \email{sebastien.gouezel@univ-rennes1.fr}
\address{Fr\'ed\'eric Math\'eus, Universit\'e de Bretagne Sud, L.M.B.A., UMR 6205,
BP 573, 56017 Vannes, France} \email{Frederic.Matheus@univ-ubs.fr}
\address{Fran\c{c}ois Maucourant, IRMAR, Universit\'e Rennes 1,
35042 Rennes Cedex, France}
\email{francois.maucourant@univ-rennes1.fr}
\date{February 5, 2014}
\keywords{Random walk, countable group, entropy, spectral radius, drift, volume growth, Poisson boundary}
\subjclass[2010]{05C81, 60B15, 60J50}
\begin{document}

\begin{abstract}
The entropy, the spectral radius and the drift are important
numerical quantities associated to random walks on countable groups.
We prove sharp inequalities relating
those quantities for walks with a finite second moment, improving
upon previous results of Avez, Varopoulos, Carne, Ledrappier. We also
deduce inequalities between these quantities and the volume growth of
the group. Finally, we show that the equality case in our inequality
is rather rigid.
\end{abstract}

\maketitle

\section{Introduction and main results}

Let $\Gamma$ be a countable group and $\mu$ a probability measure on
$\Gamma$. The right random walk associated with the pair
$(\Gamma,\mu)$ is the Markov chain on $\Gamma$ whose transition
probabilities are defined by $p(x,y) = \mu(x^{-1}y)$. A realization
of the random walk starting from the identity is given by $X_0 = e$
and $X_n = \gamma_1 \dotsm \gamma_n$ where $(\gamma_i)_i$ is an
independent sequence of $\Gamma$-valued $\mu$-distributed random
variables. The law of $X_n$ is the $n$-fold convolution $\mu^{*n}$ of
$\mu$.

Let $\lgth{\cdot}=\dist(\cdot,e)$ denote the distance to the identity, for
a proper left-$\Gamma$-invariant distance $\dist(\cdot,\cdot)$ on
$\Gamma$ (in examples, we will choose implicitly the word length with
respect to a finite symmetric set of generators $S$). Several
numerical quantities were introduced to describe the asymptotic
behavior of $X_n$. The asymptotic entropy $h$, the spectral radius
$\rho$ and the drift (or rate of escape) $\ell$ of the random walk with
respect to $\lgth{\cdot}$ are defined by
\begin{align*}
  h     & =  \lim_n -\frac{1}{n}\sum_g \mu^{*n}(g)\log \mu^{*n}(g), \\
  \rho  & =  \limsup_n \root n \of {\mu^{*n}(e)}\leq 1, \\
\textrm{and}\quad
  \ell  & =  \lim_n \frac{1}{n}\sum_g \lgth{g} \mu^{*n}(g).
\end{align*}
The asymptotic entropy is well-defined if the entropy
$H(\mu) = -\sum_{g\in\Gamma} \mu(g) \log \mu(g)$ is finite.
No assumption on the measure $\mu$ is required to define the spectral
radius $\rho$. The drift $\ell$ is well-defined if $\mu$ has finite first
moment. Note that if the cardinality of the balls $B(e,n)$ grows at most
exponentially, then finiteness of the first moment implies finiteness
of the entropy, see~\cite{Der86,Kaim98}).

Assume that $\mu$ is supported on a finite set of generators $S$, and
that $\dist$ is the corresponding word distance. When the set $S$
has $2d$ elements, the drift is bounded by the drift of the simple
random walk in a regular tree with valence $2d$, i.e., $1-1/d$.
However, if there are a lot of relations in the group, the walk is
more likely to come back closer to the identity, and one would expect
a smaller drift. In this direction, it is more relevant to consider
the volume growth $v = \lim_n \frac{1}{n}\log \#B(e,n)$ of $\Gamma$
with respect to $S$ rather than merely the number of generators: one
may expect that a bound on $v$ implies a
bound on the drift, of the form $\ell \leq f(v)$ for some function
$f$ taking values in $[0,1)$. Such an inequality is surprisingly hard
to prove directly. Our first result answers this question, for an
explicit function $f$. A similar discussion holds for the spectral
radius (one can bound $\rho$ from below using the number of
generators, by $\sqrt{2d-1}/d$, see Kesten~\cite{Kes59}, but bounds
involving $v$ are harder to come with).

Our inequalities hold for measures with finite second moment;
throughout the paper, we will write $M_2 (\mu) \coloneqq \left(\sum_g
\lgth{g}^2 \mu(g)\right)^{1/2}$ for the $\ell^2$-norm with respect to
the measure $\mu$ of the distance to the identity.

\begin{thm}
\label{thm_v}
Let $\Gamma$ be a countable group with a proper left-invariant
distance, such that $v=\liminf_n \frac{1}{n}\log \#B(e,n)$ is finite.
Let $\mu$ be a symmetric probability measure with a finite second
moment on $\Gamma$. Denote by $\tilde\ell=\ell/M_2(\mu)$ and $\tilde
v = M_2(\mu)v$ the drift and the growth for the distance $\widetilde
\dist(g,h)=\dist(g,h)/M_2(\mu)$. The following inequalities hold:
  \begin{equation*}
  \tilde \ell \leq \tanh(\tilde v/2)
  , \quad
  h           \leq \tilde v \tanh(\tilde v/2)
  ,\quad
  \rho        \geq 1/\cosh(\tilde v/2).
  \end{equation*}
\end{thm}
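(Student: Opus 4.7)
The three inequalities are simultaneously saturated by the simple random walk on the $2d$-regular tree: there $v=\log(2d-1)$, $M_2=1$, $\ell=1-1/d$ and $\rho=\sqrt{2d-1}/d$, which give $\tilde v/2=\log\sqrt{2d-1}$, and hence $\tanh(\tilde v/2)=1-1/d=\tilde\ell$ and $\cosh(\tilde v/2)=d/\sqrt{2d-1}=1/\rho$. So the tree is the extremal object, and the proof should be viewed as a quantitative comparison with it.

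I would begin with the spectral radius bound $\rho\geq 1/\cosh(\tilde v/2)$, since it is the most intrinsic of the three. Because $\mu$ is symmetric, the convolution operator $P$ defined by $Pf(g)=\sum_s\mu(s)f(gs)$ is self-adjoint on $\ell^2(\Gamma)$, so $\rho=\|P\|$ and one has the variational bound $\rho\geq\langle Pf,f\rangle/\|f\|_2^2$ for every non-zero $f\in\ell^2(\Gamma)$. The natural test function, suggested by the tree case where $e^{-v|g|/2}$ is the Perron eigenfunction away from the origin, is the truncated radial function $f_R(g)=e^{-v|g|/2}\mathbf{1}_{B(e,R)}$; this lies in $\ell^2(\Gamma)$ precisely because of the definition of $v$ as a $\liminf$. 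Expanding $\langle Pf_R,f_R\rangle$ and letting $R\to\infty$, one hopes to recover a sharp lower bound on $\rho$. The critical algebraic step must convert the naive estimate $\sum_s\mu(s)e^{-v|s|/2}$ (obtained from the triangle inequality alone) into the desired $1/\cosh(M_2v/2)$, by exploiting both the symmetry $\mu(s)=\mu(s^{-1})$ and the second-moment assumption.

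The drift bound $\tilde\ell\leq\tanh(\tilde v/2)$ should be obtainable either by a parallel variational argument or by deducing it from the spectral bound through a Tauberian-type inequality relating $\rho$, $\ell$ and $M_2$ (this would naturally involve the exponential moments $\E[e^{\alpha|X_n|}]$ and Jensen's inequality). The entropy bound $h\leq\tilde v\tanh(\tilde v/2)$ then follows immediately from the drift bound combined with Guivarc'h's classical estimate $h\leq v\ell$, which is valid here since $v<\infty$ and $\mu$ has a finite first moment.

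The principal obstacle is matching the sharp functional form. A straightforward Rayleigh quotient estimate with $f(g)=e^{-\alpha|g|}$ yields only $\rho\geq\sum_s\mu(s)e^{-\alpha|s|}$, which is strictly weaker than $1/\cosh(M_2v/2)$ even on the tree (indeed, $1/\sqrt{2d-1}<\sqrt{2d-1}/d$ for every $d>1$); likewise, Carne--Varopoulos-type estimates only deliver Gaussian lower bounds $\rho\geq e^{-c M_2^2v^2}$, which have the wrong functional form for large $\tilde v$. Recovering the exact $\cosh$ factor will require a genuinely refined construction---perhaps a better radial weight, an annulus-symmetrization trick, or a positive-definite function adapted to the hyperbolic-tangent structure---combined with a careful use of the self-adjointness of $P$ and the second-moment control provided by $M_2$.
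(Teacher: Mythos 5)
There is a genuine gap: you never actually prove either the drift bound or the spectral radius bound. Your plan for $\rho\geq 1/\cosh(\tilde v/2)$ is a direct Rayleigh quotient with a radial test function, and you yourself observe that every version of this you can write down falls short of the $\cosh$ factor; the concluding sentence (``will require a genuinely refined construction'') is an admission that the key step is missing. The same applies to the drift bound, for which you offer only the alternatives ``a parallel variational argument'' or an unspecified ``Tauberian-type inequality''. The paper explicitly warns that a bound $\ell\leq f(v)$ ``is surprisingly hard to prove directly'', and this is exactly the trap here: no direct test-function or exponential-moment argument is known to give the sharp constants, and none is supplied.

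The missing idea is that Theorem~\ref{thm_v} is not proved inside the group at all; it is a formal corollary of Theorem~\ref{main_thm} combined with the fundamental inequality $h\leq \ell v=\tilde\ell\,\tilde v$ of Guivarc'h, with the entropy acting as the intermediary in \emph{both} directions. From~\eqref{eq:main_ell} one has $2\tilde\ell\argth\tilde\ell\leq h\leq \tilde\ell\,\tilde v$, hence $2\argth\tilde\ell\leq\tilde v$, i.e.\ $\tilde\ell\leq\tanh(\tilde v/2)$; feeding this back into $h\leq\tilde\ell\,\tilde v$ gives $h\leq\tilde v\tanh(\tilde v/2)$ (so your deduction of the entropy bound from the drift bound is correct, but the logical order is the reverse of what you propose: the drift bound comes from the entropy inequality, not the other way around, and certainly not from the spectral bound). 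Finally, setting $r=1/\cosh(\tilde v/2)$ one checks
\begin{equation*}
2\sqrt{1-r^2}\,\argth\sqrt{1-r^2}=2\tanh(\tilde v/2)\argth\bigl(\tanh(\tilde v/2)\bigr)=\tilde v\tanh(\tilde v/2)\geq h\geq 2\sqrt{1-\rho^2}\,\argth\sqrt{1-\rho^2},
\end{equation*}
and the monotonicity of $t\mapsto 2\sqrt{1-t^2}\argth\sqrt{1-t^2}$ yields $\rho\geq r$. Your identification of the $2d$-regular tree as the extremal case is correct and is a useful sanity check, but it does not substitute for the entropy-mediated argument, which is where all the content of the theorem resides (namely in the proof of Theorem~\ref{main_thm} via the Poisson and Busemann boundaries).
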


These inequalities are consequences of other inequalities relating
$h$ to $\ell$ and $\rho$:

\begin{thm}
\label{main_thm}
Let $\mu$ be a symmetric probability measure with finite entropy on a
countable group $\Gamma$ with a proper left-invariant distance. Then
  \begin{equation}
  \label{eq_main_rho}
  2\sqrt{1-\rho^2} \argth\sqrt{1-\rho^2}\leq h.
  \end{equation}
Moreover, if $\mu$ has a finite second moment,
  \begin{equation}
  \label{eq:main_ell}
  2 \tilde\ell \argth \tilde\ell \leq h
  \end{equation}
where $\tilde\ell = \ell/M_2(\mu)$.
\end{thm}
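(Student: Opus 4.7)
The plan is to establish the spectral inequality \eqref{eq_main_rho} directly and deduce the drift inequality \eqref{eq:main_ell} from it, using the auxiliary comparison $\tilde\ell \leq \sqrt{1-\rho^2}$ (a reversibility / Cauchy--Schwarz computation on the Markov operator, valid for symmetric walks with finite second moment) together with the monotonicity of $x \mapsto 2x\argth x$ on $[0,1)$. The same function appears in both inequalities, which suggests a single mechanism producing the factor $\argth$; a good sanity check is the simple random walk on the $2d$-regular tree, where \eqref{eq_main_rho} becomes an equality (both sides equal $\tfrac{d-1}{d}\log(2d-1)$).

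The natural starting point is the Carne--Varopoulos Gaussian heat-kernel upper bound, which for symmetric $\mu$ reads $\mu^{*n}(g) \leq 2\rho^n \exp(-|g|^2/(2nM_2(\mu)^2))$. Taking $-\log$ and averaging against $\mu^{*n}$ yields
\[
\frac{H(\mu^{*n})}{n}\geq -\log\rho + \frac{\E[|X_n|^2]}{2n^2 M_2(\mu)^2} - \frac{\log 2}{n},
\]
hence at best $h \geq -\log\rho + \tilde\ell^2/2$. On the tree this is already off by roughly a factor of two as $\rho \to 1$, so a more refined argument is needed. I would sharpen by tilting the walk: for a parameter $t$, Carne--Varopoulos gives an explicit upper bound on $\sum_g \mu^{*n}(g)\exp(t|g|)$, while Jensen gives a matching lower bound in terms of the first moment $\E[|X_n|]\sim \ell n$. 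Applying Legendre duality, and identifying $h = -\lim_n \E[\log\mu^{*n}(X_n)]/n$ with the value of the resulting rate function at the drift, should then produce the sharp lower bound; the function $\argth$ enters as the inverse of the derivative of a quadratic tilt-function at the critical tilt $t^{*} = \argth\sqrt{1-\rho^2}$.

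The main obstacle will be uniform control of the tilt analysis: Carne--Varopoulos is sharp only near the diagonal, and one must argue that the atypical values of $|X_n|$ (far from $\ell n$) do not dominate the expectation defining $H(\mu^{*n})$. The finite second moment hypothesis on $\mu$ should be enough to localize the relevant sum in a window of size $O(\sqrt{n})$ around $\ell n$, where Carne--Varopoulos is essentially tight; an alternative technical route is to iterate the estimate on coarsened time scales, viewing $\mu^{*n}$ as $\lfloor n/k\rfloor$ steps of $\mu^{*k}$ and optimizing over $k$, which effectively replaces the step distribution by a near-Gaussian one via the central limit theorem. The regular-tree case, where the rate function for $|X_n|/n$ is genuinely Gaussian near $\ell$, provides both the extremal configuration and the validation that this scheme delivers exactly the constant $2\sqrt{1-\rho^2}\argth\sqrt{1-\rho^2}$.
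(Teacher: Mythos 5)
There are two independent gaps here, each fatal. First, your reduction of~\eqref{eq:main_ell} to~\eqref{eq_main_rho} rests on the auxiliary inequality $\tilde\ell \leq \sqrt{1-\rho^2}$, which is false. By Kesten's theorem $\rho=1$ for every amenable group, yet there are amenable groups on which the simple random walk has positive entropy and positive speed (e.g.\ the lamplighter group $\Z/2\Z \wr \Z^3$, which has finite exponential growth, so $\ell\geq h/v>0$). For such a walk $\sqrt{1-\rho^2}=0<\tilde\ell$, the inequality~\eqref{eq_main_rho} is vacuous ($0\leq h$), and~\eqref{eq:main_ell} carries the entire content of the theorem; no monotonicity argument can recover it from~\eqref{eq_main_rho}. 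The two inequalities genuinely need separate proofs: the paper proves the first on the Poisson boundary and the second on the Busemann (horofunction) compactification, where a Furstenberg--Khasminskii formula of Karlsson--Ledrappier expresses $\ell$ as a boundary integral of the Busemann cocycle, to which Cauchy--Schwarz and Jensen are applied.

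Second, the Carne--Varopoulos route cannot reach~\eqref{eq_main_rho}. Any estimate of the form $\mu^{*n}(g)\leq C\rho^n\exp(-n\,R(\lgth{g}/n))$, processed as you describe, contributes $-\log\rho$ as its $\rho$-dependence plus a nonnegative term that vanishes when $\ell=0$; the best such mechanisms can give is an inequality of the shape $A(\tilde\ell)+2\abs{\log\rho}\leq h$ with $A(x)=(1+x)\log(1+x)+(1-x)\log(1-x)$ (and even that needs an extra idea to avoid losing a factor of $2$: averaging the pointwise bound via Jensen gives only $\tfrac12 A(\tilde\ell)-\log\rho$, as in L{\oe}uillot's refinement of Carne). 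Since $2\sqrt{1-\rho^2}\argth\sqrt{1-\rho^2}>-2\log\rho$ for all $\rho\in(0,1)$ (asymptotic to Ledrappier's $4(1-\rho)$ as $\rho\to1$, versus $2(1-\rho)$ for $-2\log\rho$), forgetting $\ell$ in any such estimate yields something strictly weaker than~\eqref{eq_main_rho}. Your own sanity check exposes this: on the $2d$-regular tree, $-2\log\rho=2\log d-\log(2d-1)$ is strictly less than $h=\frac{d-1}{d}\log(2d-1)$, so the sharpness you observe is carried by the drift term you are trying to tilt away. The extra strength of~\eqref{eq_main_rho} near $\rho=1$ comes from a different mechanism entirely: a Dirichlet-form estimate $\langle (I-P)f,f\rangle\geq 1-\rho$ applied to $f=\sqrt{\mu^{*n}}$ (equivalently, on the Poisson boundary, $\rho\geq\int c_0^{1/2}\dd\mu\dd\nu_0$), combined with the Kaimanovich--Vershik entropy formula and Jensen's inequality for the convex function $F\circ G^{-1}$, where $F(x)=2x\argth x$ and $G(x)=1-\sqrt{1-x^2}$, taken with respect to the symmetrized measure $\frac{1+c_0}{2}\dd\mu\dd\nu_0$. (A smaller but real issue: Carne--Varopoulos with $M_2(\mu)$ in place of the support radius is not the standard statement and is not justified for measures with unbounded support.)
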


The first theorem is a consequence of the second one:
\begin{proof}[Proof of Theorem~\ref{thm_v} using
Theorem~\ref{main_thm}] The entropy satisfies the so-called
``fundamental inequality'' $h\leq \ell v=\tilde \ell \tilde v$,
by~\cite{Guiv80}. Since $2\tilde \ell \argth \tilde\ell \leq h$
by~\eqref{eq:main_ell}, this yields $2\argth(\tilde \ell) \leq \tilde
v$, hence $\tilde \ell \leq \tanh(\tilde v/2)$. Since $h\leq
\tilde\ell \tilde v$, we deduce that $h\leq \tilde v \tanh(\tilde
v/2)$. Last, we remark that $r=1/\cosh(\tilde v/2)$ satisfies
  \begin{equation*}
  2 \sqrt{1-r^2} \argth \sqrt{1-r^2} = 2 \tanh(\tilde v/2) \argth(\tanh(\tilde v/2))
  = \tilde v \tanh(\tilde v/2).
  \end{equation*}
We have already proved that this is larger than or equal to $h$.
Together with~\eqref{eq_main_rho} and the fact that $t\mapsto
2\sqrt{1-t^2} \argth\sqrt{1-t^2}$ is non-increasing, this gives
$\rho\geq r$, as claimed.
\end{proof}

The inequalities of Theorem~\ref{main_thm} have several predecessors.
The first lower bound for the asymptotic entropy is due to
A.~Avez~\cite{Avez76}, who proved that $h \geq -2\log\rho$. More
recently, Ledrappier~\cite{ledrappier_sharp_entropy} showed that
$h\geq 4(1-\rho)$. Those two inequalities are not comparable,
Ledrappier's being stronger for $\rho$ close to $1$ but weaker for
$\rho$ close to $0$. The inequality~\eqref{eq_main_rho} is a common
strengthening of both inequalities of Avez and Ledrappier, since the
left-hand side of~\eqref{eq_main_rho} is larger than $\max(-2\log
\rho, 4(1-\rho))$ (and asymptotic to $-2\log\rho$ when $\rho$ tends
to $0$, and to $4(1-\rho)$ when $\rho$ tends to $1$). This statement
may not be obvious from the formula~\eqref{eq_main_rho}, but it
follows readily from the analysis of this function that we will have
to do later on
\begin{arxivversion}
(see in particular Lemma~\ref{lem_etude_FGmoinsun}, or
Corollaries~\ref{cor_chebyshev} and~\ref{cor_ledrappier}).
\end{arxivversion}
\begin{submittedversion}
(see in particular Lemma~\ref{lem_etude_FGmoinsun}).
\end{submittedversion}

\medskip

Lower bounds for the asymptotic entropy $h$ involving the drift
$\ell$ were also considered. Varopoulos~\cite{Varop85} and
Carne~\cite{Carne85} proved that
\[
\forall g\in \Gamma, \mu^{*n}(g)\leq 2\exp\biggl[-\frac{\lgth{g}^2}{2nk^2}\biggr]
\]
where $k$ is the radius of the smallest ball containing the support
of $\mu$. The consequence for $h$ and $\ell$ becomes $h\geq \ell^2 /
2k^2$.
\begin{arxivversion}
Actually, Carne's estimate can be improved. A careful inspection of
his proof enabled J.~L{\oe}uillot to prove the following: with the same
notations
\[
\forall g\in \Gamma, \mu^{*n}(g)\leq 2\rho^n \exp\biggl[-\frac{n}{2} A\biggl(\frac{\lgth{g}}{nk}\biggr)\biggr]
\]
where $A$ is defined, for $x\in [0,1)$, by $A(x) =
(1+x)\log(1+x)+(1-x)\log(1-x)$. Using Jensen inequality and the
convexity of $A$, he deduced in~\cite{Loe11} that $h\geq A(\ell/k)/2
- \log\rho$. Using Theorem~\ref{main_thm}, one can improve this
inequality by a factor of $2$ and replace $k$ by $M_2(\mu)\leq k$,
see Corollary~\ref{cor_chebyshev}.
\end{arxivversion}
\begin{submittedversion}
Theorem~\ref{main_thm} improves this inequality in several ways: it
replaces $k$ by $M_2(\mu)$ (allowing measures with infinite support),
it replaces the square function with the better function $2 x
\argth(x)$ (which is $x^2 + o(x^2)$ at $0$, and strictly larger than
$x^2$ away from $0$), and it gains a multiplicative factor of $2$.
\end{submittedversion}

Recently, A.~Erschler and A.~Karlsson proved in~\cite{EK12} that
$h\geq \ell^2 / C(\mu)$ still holds for symmetric probability
measures with finite second moment giving nonzero probability to the
identity, where $C(\mu)$ depends on $\mu$ (the main dependency is on
$\mu(e)>0$ and on $M_2(\mu)<\infty$).

\medskip

Theorem~\ref{main_thm} owes a lot to~\cite{ledrappier_sharp_entropy}
and~\cite{EK12}: our investigations started when we tried to
understand and sharpen the arguments in those two papers. The proofs
in these articles are given inside the group, studying the random
walk at finite time (or a poissonized version of the random walk
in~\cite{ledrappier_sharp_entropy}). It turns out that
Theorem~\ref{main_thm} can be proved following the same strategy.
However, an (essentially equivalent) proof can also be given using
various boundaries (the Poisson boundary for the inequality involving
$\rho$, the horocycle boundary for the inequality involving $\ell$).
This proof has the advantage of avoiding limits completely, making it
possible to characterize the equality case in our inequalities (see
Proposition~\ref{equal} below). Therefore, we will concentrate mainly
on the proof using boundaries: at the beginning of
Section~\ref{sec:proofs}, we will quickly sketch the proofs inside
the group, without giving all the details, and the rest of
Section~\ref{sec:proofs} will be devoted to a complete proof using
boundaries.
\begin{submittedversion}
Before this, Section~\ref{sec:examples} is devoted to more examples
and comments.
\end{submittedversion}
\begin{arxivversion}

The other sections of the paper are organized as follows:
Section~\ref{sec:examples} is devoted to more examples and comments,
and Section~\ref{sec:chebyshev} contains a discussion of a corollary
of Theorem~\ref{main_thm}, together with an additional elementary
proof more in the spirit of Carne-Varopoulos that we find interesting
in its own right. This section is removed in the published version.
\end{arxivversion}

\medskip

Let us stress that inequalities similar to the results of
Theorem~\ref{main_thm} have been known for a longer time for
Brownian motion on cocompact Riemannian manifolds (see for instance~\cite{Kaim86,
ledrappier_continuous, ledr10}): infinitesimal inequalities are
available and make for a simpler result.

\section{Examples and comments}
\label{sec:examples}

Let us first note that the conclusion of Theorem~\ref{main_thm} does
not hold any more if the measure $\mu$ is not symmetric. For
instance, for the random walk on $\Z$ given by $\mu = p\delta_{-1} +
(1-p)\delta_{+1}$, one has $h=0$ while $\ell = \abs{2p-1}$ and $\rho
= 2\sqrt{p(1-p)}$. When $p\not=1/2$, one gets $\ell>0$ and $\rho<1$,
hence Theorem~\ref{main_thm} does not hold in this case.

\begin{example}
Let $\Gamma = \F(a_1,\dotsc,a_d)$ be the free non-abelian group over
$\{a_1,\dotsc,a_d\}$, with its usual word distance. We consider the
simple random walk on $\Gamma$, i.e., we take for $\mu$ the uniform
measure on $S = \{a_1,\dotsc,a_d\}^{\pm 1}$. In this case, one can
easily compute all the quantities involved in Theorems~\ref{thm_v}
and~\ref{main_thm}. Indeed, one has
\begin{itemize}
\item $\ell = 1-1/d$, since at each step away from the identity
    there is probability $1-1/(2d)$ to go further to infinity,
    and $1/(2d)$ to come back.
\item $\rho = \frac{\sqrt{2d-1}}{d}$ since the number of words
    back to the identity at time $2n$ has a generating series
    $\frac{d\sqrt{1-4z^2(2d-1)}-d+1}{1-4d^2z^2}$, with first
    singularity at $z=1/(2\sqrt{2d-1})$, see~\cite[Lemma~1.24]{Woe00}.
\item $h = (1-1/d)\log(2d-1)$. This follows for instance from the
    description of the Poisson boundary as the set of infinite
    reduced words $b_0 b_1\dotsm$, with the measure $\nu$ giving
    mass $1/(2d (2d-1)^{n-1})$ to any cylinder of length $n$, and
    from the formula~\eqref{eq:entropy_Poisson} below giving the
    entropy as an integral over the Poisson boundary of the
    logarithm of the Radon-Nikodym derivative of the group
    action. See~\cite{ledrappier_sharp_entropy} or the proof of
    Corollary~\ref{characterize} for more details.
\item $v=\log(2d-1)$. Indeed, the sphere of radius $n$ has
    cardinality $2d (2d-1)^{n-1}$.
\end{itemize}
It follows that, in this case, all the inequalities in
Theorems~\ref{thm_v} and~\ref{main_thm} are equalities. This shows in
particular that the inequalities of those theorems are sharp for
infinitely many values of the entropy.
\end{example}

\begin{example}
\label{example:h<lv}
From the free group, one can construct other examples where equality
holds in Theorem~\ref{main_thm}. For instance, let $H$ be a finite
group and let $\F$ be a free group on finitely many generators
$\{a_1,\dotsc, a_d\}$. In $\Gamma = H\times\F$, consider the
generating set $S = \{(x,a_i^{\pm 1}) \st x\in H, i\in \{1,\dotsc,
d\} \}$. The simple random walk on $(\Gamma,S)$ projects to the
simple random walk on the free factor $\F$, and those random walks
have the same drift, entropy and spectral radius. Since equality
holds in Theorem~\ref{main_thm} for $\F$, it follows that is also
holds for $(\Gamma, S)$.

More generally, consider an exact sequence
  \begin{equation}
  \label{eq:exact_sequence}
  1 \to H \to \Gamma \to \F \to 1
  \end{equation}
where $\F$ is a group whose Cayley graph with respect to some
generating system is a tree, and a probability measure on the set of
generators of $\Gamma$ that projects to the uniform measure on the
generators of $\F$. If the drift, entropy and spectral radius of the
random walk on $\Gamma$ are the same as on $\F$ (this is for instance
the case if $H$ has subexponential growth), then equality holds in
Theorem~\ref{main_thm} for the random walk on $\Gamma$. Concretely,
one may consider for instance any semi-direct product $\Gamma = \Z^k
\rtimes \F$ where $\F$ is a free subgroup of $GL(k,\Z)$. For another
example, let $\Gamma'=\Z \wr \Z/3\Z$ with its standard set of
generators $S'$, let $\Gamma=\Gamma' \times \F$ and let $S=\{(x,
a_i^{\pm 1}) \st x\in S', i\in \{1,\dotsc, d\} \}$ (this set
generates $\Gamma$ since we use $\Z/3\Z$ -- with $\Z/2\Z$ instead, it
would generate an index two subgroup of $\Gamma$). Since the simple
random walk on $\Gamma'$ has zero entropy and drift
(see~\cite{kaim_vershik}), equality in Theorem~\ref{main_thm} holds
for the simple random walk on $\Gamma$. This example is interesting
since the volume growth $v$ of $\Gamma$ is strictly larger than the
volume growth in the free group as $\Gamma'$ has exponential growth.
Hence, $h<\ell v$, showing that equality in Theorem~\ref{main_thm}
does not imply equality in the fundamental inequality. There is no
implication in the other direction either, see the discussion after
Corollary~\ref{characterize}.

We conjecture (but are unable to prove) that the above
situation~\eqref{eq:exact_sequence} is the only case where equality
holds in Theorem~\ref{main_thm}. Partial results in this direction
are given in Corollaries~\ref{characterize} and~\ref{one-ended}.
\end{example}

\begin{example}
Assume that $\Gamma$ is the fundamental group of a closed compact
surface of genus $2$. Consider the following presentation of
$\Gamma$:
\[
\Gamma = \langle a_1,a_2,b_1,b_2 \st [a_1,b_1][a_2,b_2] = 1 \rangle.
\]
The growth $v$ of $\Gamma$ with respect to the generating set
$S=\{a_1,a_2,b_1,b_2\}^{\pm 1}$ is explicitly known. Following Cannon
(see~\cite[\S~VI.A.8]{laHarpe}), it is the logarithm of an algebraic number, and
its value is $v = 1.9430254\dots$. Let $\mu$ be any symmetric
probability measure on $S$. Theorem~\ref{thm_v} gives
  \begin{equation*}
  \ell \leq 0.749368278, \quad
  h \leq 1.456041598,\quad
  \rho \geq 0.66215344.
  \end{equation*}
This is better than the naive estimates obtained using only the
number of generators, by comparing to the free group, giving
$\ell\leq 0.75$ and $h\leq 1.45944$ and $\rho\geq 0.66143$. Note that
the gain is not very important, but this is not surprising since
$\Gamma$ is very close to being free (the growth in the corresponding
free group is $\log(7)=1,945910\dots$, close to $v$ up to
$3.10^{-3}$).

Assume now that $\mu$ is the uniform measure on $S$. The best known
estimates on $\rho$ are $0.662420 \leq \rho \leq 0.662816$
(see~\cite{bartholdi_rho} and~\cite{nagnibeda_rho}). It follows that
our bound for $\rho$, although worse than Bartholdi's, is precise up to
$7.10^{-3}$, while the bound using the number of generators is
precise up to $14.10^{-3}$, i.e., twice worse. Using Nagnibeda's
upper bound for $\rho$, the inequality~\eqref{eq_main_rho} estimating
$h$ in terms of $\rho$ gives $h\geq 1.452903618$. Since $\ell \geq
h/v$, we also have $\ell\geq 0.747753281$. This proves that the upper
bounds we get for $\ell$ and $h$ are precise up to $2.10^{-3}$ and
$4.10^{-3}$, to be compared with the bounds using only the number of
generators that are precise up to $2.10^{-3}$ and $7.10^{-3}$: the
gain is very small for $\ell$, more significant for $h$.
\end{example}

Let $\Gamma$ be a countable group, and let $\mu$ be a symmetric
probability measure on $\Gamma$ whose support generates $\Gamma$,
with finite entropy. A $(\Gamma,\mu)$-space is a probability space
$(\mathcal{B},\nu)$ endowed with a $\Gamma$-action, such that the
probability $\nu$ is $\mu$-stationary, i.e.,
\[
\nu = \mu * \nu \stackrel{\text{def}}{=} \sum_{\gamma\in\Gamma}\mu(\gamma)\gamma_*\nu.
\]
In particular, $\gamma_*\nu$ is absolutely continuous with respect to
$\nu$, for every $\gamma$ in the subgroup generated by the support of
$\mu$, which we assume to coincide with $\Gamma$.

A particularly interesting $(\Gamma,\mu)$-space is its \emph{Poisson
boundary}, that we will denote by $(\mathcal{B}_0, \nu_0)$: it is the
unique $(\Gamma,\mu)$-space parameterizing harmonic functions.
Equivalently, it can be seen as the exit boundary of the random walk
on the group, made of the events that only depend on the tails of
infinite trajectories of the random walk (see~\cite{kaim_vershik} for
the equivalence and for several other definitions).

The Poisson boundary will play an important role in the proof of
Theorem~\ref{main_thm}. It will follow from the proof that the
equality case in this theorem implies a rigid behavior of the Poisson
boundary:

\begin{prop}\label{equal}
On a countable group $\Gamma$ with a proper left-invariant distance,
consider a symmetric probability measure $\mu$ with finite second
moment. Assume that one of the inequalities of Theorem~\ref{main_thm}
is an equality. Then, on the Poisson boundary $(\mathcal{B}_0,
\nu_0)$ of $(\Gamma,\mu)$, the Radon-Nikodym derivative
$\frac{\dd\gamma^{-1}_*\nu_0}{\dd \nu_0}(\xi)$ takes only two values
$e^\alpha$ and $e^{-\alpha}$, $\mu\otimes \nu_0$ almost surely.
\end{prop}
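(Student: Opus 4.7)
The plan is to trace through the boundary proof of Theorem~\ref{main_thm} and pinpoint the single Jensen-type step where the inequality can be tight, then translate equality there into the claimed two-value condition for $k(\gamma,\xi) \coloneqq \frac{\dd\gamma^{-1}_*\nu_0}{\dd\nu_0}(\xi)$.

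First I would set up the Furstenberg formula $h = -\iint \log k\, \dd\mu\, \dd\nu_0$ together with the cocycle relation $\log k(\gamma^{-1},\gamma\xi) = -\log k(\gamma,\xi)$ (a consequence of $k(\gamma_1\gamma_2,\xi) = k(\gamma_1,\gamma_2\xi)\,k(\gamma_2,\xi)$). Using the symmetry $\mu(\gamma^{-1}) = \mu(\gamma)$ together with the change of variable $\xi \mapsto \gamma\xi'$, under which $\dd\nu_0(\xi) = k(\gamma,\xi')\, \dd\nu_0(\xi')$, a brief manipulation yields the symmetrized identity
\[
h = \tfrac{1}{2}\iint \log k(\gamma,\xi)\, \bigl(k(\gamma,\xi)-1\bigr)\, \dd\mu(\gamma)\, \dd\nu_0(\xi),
\]
whose integrand is pointwise nonnegative and whose expression is manifestly invariant under the involution $k \leftrightarrow k^{-1}$.

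Next, I would expect the boundary proofs of~\eqref{eq_main_rho} and~\eqref{eq:main_ell} to amount to bounding this integrand from below by a strictly convex function of $\log k$ (or of $\sqrt{k}$, depending on the chosen boundary) and then applying a single Jensen inequality. In the $\rho$ case, the Jensen step ties $h$ to $\iint\sqrt{k}\, \dd\mu\, \dd\nu_0$, which controls $\rho$ from below; in the $\ell$ case, the relevant quantity is the expected value of a real Busemann cocycle on the horocycle boundary, which is itself a $\Gamma$-equivariant quotient of $(\mathcal{B}_0,\nu_0)$, so the Jensen step takes place in a conditional form on the Poisson boundary.

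Saturating either Jensen inequality forces the random variable $\log k$, distributed under $\mu\otimes\nu_0$, to be concentrated on the set where the strictly convex function in play equals its affine minorant. The $k \leftrightarrow k^{-1}$ symmetry then forces this support to be a pair $\{-\alpha,+\alpha\}$ for some $\alpha \geq 0$, which is exactly the conclusion. The main obstacle I foresee is the case where equality is assumed in~\eqref{eq:main_ell}: there the boundary Jensen step, naturally written on the horocycle boundary, only yields a two-value condition on the conditional expectation of $\log k$ relative to the horocycle projection. To upgrade this to a two-value condition on $\log k$ itself, one must use the strict convexity of the pointwise bound a second time, fiberwise in the horocycle projection, together with the $k \leftrightarrow k^{-1}$ symmetry, in order to conclude that $\log k$ is a.s.\ equal to its conditional expectation and hence takes only the values $\pm\alpha$.
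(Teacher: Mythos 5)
Your handling of equality in~\eqref{eq_main_rho} is essentially the paper's argument. The symmetrized identity $h=\tfrac12\iint \log k\,(k-1)\,\dd\mu\,\dd\nu_0$ is correct (it is Lemma~\ref{lemsym} applied to $\log c_0$, rewritten against the measure $\dd\mu\,\dd\nu_0$ instead of $\dd m_0$), and saturating the single Jensen step --- for the convex function $F\circ G^{-1}$ with $F(x)=2x\argth x$ and $G(x)=1-\sqrt{1-x^2}$, taken with respect to the probability measure $\tfrac{1+c_0}{2}\dd\mu\,\dd\nu_0$ --- forces the even quantity $G\bigl(\tfrac{1-k}{1+k}\bigr)$ to be a.s.\ constant, whence $k\in\{e^{\alpha},e^{-\alpha}\}$ a.s. That part is fine.

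The gap is in the case of equality in~\eqref{eq:main_ell}. You assert that the horofunction (Busemann) boundary $(\mathcal{B}_1,\nu_1)$ is a $\Gamma$-equivariant quotient of the Poisson boundary and propose to recover the pointwise statement about $c_0$ by a fiberwise convexity argument over the ``horocycle projection.'' But $(\mathcal{B}_1,\nu_1)$ is only known to be a \emph{stationary} $(\Gamma,\mu)$-space (this is all that Karlsson--Ledrappier provide); a stationary space need not be a $\mu$-boundary, so there is in general no factor map $\mathcal{B}_0\to\mathcal{B}_1$ and the conditional expectation you invoke is not defined (e.g.\ for a centered walk on $\Z^d$ the Poisson boundary is trivial while $\nu_1$ may be a nontrivial invariant measure on directions). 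The transfer in fact goes the \emph{other} way, and this is the real content of the $\ell$-case: equality forces every inequality in the chain~\eqref{eq_multline} to be an equality, in particular $-\iint\log c_1\,\dd\mu\,\dd\nu_1=h$, i.e.\ the boundary entropy of $(\mathcal{B}_1,\nu_1)$ is \emph{maximal}; by the Kaimanovich--Vershik criterion \cite[Theorem~3.2]{kaim_vershik} the Poisson boundary is then the quotient of $(\mathcal{B}_1,\nu_1)$ obtained by identifying points not separated by the Radon--Nikodym cocycle, so $c_0$ is the push-forward of $c_1$ and inherits the two-value property established on $\mathcal{B}_1$. Note that even if your factor map existed, one would have $c_1=\E[c_0\mid\pi]$ (not $\E[\log c_0\mid\pi]$), and upgrading would again require exactly the equality $-\iint\log c_1=-\iint\log c_0$, i.e.\ the same entropy-maximality input; so this step cannot be avoided. (A minor point: on $\mathcal{B}_1$ itself the two-value property of $c_1$ does follow from strict convexity of $x\mapsto F(\sqrt{x})$ in the Jensen step, as you suggest; the paper derives it instead from equality in Cauchy--Schwarz and in $\abs{\beta(g,\xi)}\leq\lgth{g}$, which additionally shows that $\mu$ is supported on a sphere.)
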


There can be no converse to this proposition for the
inequality~\eqref{eq:main_ell} involving the distance, since the
conclusion of the proposition does not involve the distance. For
instance, consider in the free group on two generators $a$ and $b$ a
family of distances $d_\epsilon$ giving weight $1$ to $a$ and
$\epsilon$ to $b$. If $\mu$ is the uniform measure on the generators,
then equality holds in Theorem~\ref{main_thm} for $d_1$, so that the
conclusion of Proposition~\ref{equal} holds. On the other hand, the
inequality~\eqref{eq:main_ell} is strict for $\tilde \ell$ defined
using $d_\epsilon$, if $\epsilon\not=1$ (one gets $\tilde
\ell_\epsilon = (1+\epsilon)/
\sqrt{8(1+\epsilon^2)}<1/2=\tilde\ell_1$). We do not know if there is
a converse to Proposition~\ref{equal} regarding the
inequality~\eqref{eq_main_rho} about the spectral radius.

\medskip

Proposition~\eqref{equal} (the proof of which is given at the end of
Section~\ref{sec:proofs}) makes it possible to describe precisely
some situations where equality can or cannot occur. We should stress
that this is very different from the fundamental inequality $h\leq
\ell v$, where equality is much more difficult to characterize
(see~\cite{ledr01} for the free group, \cite{MaMa} when $\Gamma$ is a
free product of finite groups, or~\cite{BHM} for several
characterizations of the equality in terms of quasi-conformal
measures on the boundary when $\Gamma$ is a word-hyperbolic group).

Note that, in the previous proposition (and in the corollaries
below), the choice of the distance is not important: if there is
equality in~\eqref{eq:main_ell} for \emph{any} proper left-invariant
distance, then the conclusions of Proposition~\ref{equal} hold.

\begin{cor}
\label{characterize}
Assume that $\Gamma = \Gamma_1 * \dotsm * \Gamma_q$ ($q\geq 2$) is a
free product of finitely generated groups $\Gamma_i$ with finite
generating sets $S_i$. Let $\mu$ be a symmetric probability measure
with support equal to $S=\bigsqcup S_i$. Assume that one of the
inequalities of Theorem~\ref{main_thm} is an equality. Then the
Cayley graph of each $\Gamma_i$ with respect to $S_i$ is a regular
tree (i.e., $\Gamma_i$ is a free product of finitely many factors
$\Z$ and $\Z/2\Z$), the Cayley graph of $\Gamma$ with respect to $S$
is also a regular tree, and $\mu$ is the uniform measure on $S$.
\end{cor}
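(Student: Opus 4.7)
The plan is to combine Proposition~\ref{equal} with the classical description of the Poisson boundary of a random walk on a free product as the space of infinite reduced words. By Proposition~\ref{equal}, there exists $\alpha \geq 0$ such that the Radon-Nikodym derivative $\frac{\dd \gamma^{-1}_* \nu_0}{\dd \nu_0}(\xi)$ takes only the two values $e^\alpha$ and $e^{-\alpha}$ for $\mu \otimes \nu_0$-almost every $(\gamma, \xi)$. Since $\mu$ is supported on $S = \bigsqcup_i S_i$, the walk is nearest-neighbor in the free-product structure, and the Poisson boundary identifies with the space $\partial \Gamma$ of infinite reduced words $\eta_1 \eta_2 \dotsm$ (with $\eta_k \in \Gamma_{i_k} \setminus \{e\}$ and $i_k \neq i_{k+1}$); the stationary measure $\nu_0$ is determined by its cylinder masses and the $\Gamma$-action is by reduced multiplication on the left.

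The key computation is to evaluate the Radon-Nikodym derivative on the natural strata of $\partial\Gamma$. Fix $\gamma \in S_i$ and split $\partial\Gamma$ according to the factor $\Gamma_j$ containing the first letter $\eta_1$. If $j \neq i$, then $\gamma \cdot (\eta_1 \eta_2 \dotsm) = \gamma \eta_1 \eta_2 \dotsm$ remains reduced, and the derivative on this stratum arises as a limit of cylinder ratios $\nu_0(C_{\eta_1 \dotsm \eta_n}) / \nu_0(C_{\gamma \eta_1 \dotsm \eta_n})$, which I expect to depend only on the pair $(\gamma, j)$. If $j = i$, one has to distinguish the case $\eta_1 = \gamma^{-1}$ (the first letter cancels) from $\eta_1 \neq \gamma^{-1}$ (the first letter becomes $\gamma \eta_1 \in \Gamma_i$); the derivative then reflects the internal structure of the walk within the factor $\Gamma_i$.

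Imposing the two-value rigidity on these formulas should yield strong algebraic constraints. First, comparing the values of the derivative as $\gamma$ ranges over $S_i$ and $j$ ranges over the indices distinct from $i$, and combining with the stationarity equation $\nu_0 = \mu * \nu_0$, forces the hitting distribution of first letters in $\partial\Gamma$ to be uniform within each factor, and in turn $\mu$ to be uniform on $S$. Second, if the Cayley graph of some $\Gamma_i$ with respect to $S_i$ contained a non-trivial cycle, there would exist a reduced relation $s_1 \dotsm s_k = e$ with $s_j \in S_i$, and tracking the derivative along the successive multiplications $s_1,\, s_1 s_2,\, \dotsc,\, s_1 \dotsm s_{k-1}$ should produce at least three distinct values, violating rigidity. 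Hence each $\Gamma_i$'s Cayley graph is a tree, equivalently $\Gamma_i$ is a free product of finitely many copies of $\Z$ and $\Z/2\Z$; the Cayley graph of $\Gamma$ is then the free product of regular trees and is itself regular. The most delicate point will be the bookkeeping for the $j=i$ case, where the derivative genuinely depends on the internal combinatorics of $\Gamma_i$ and where both uniformity of $\mu$ on $S_i$ and the tree property must be extracted simultaneously from the two-value constraint.
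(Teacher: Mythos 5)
Your high-level strategy is the same as the paper's: invoke Proposition~\ref{equal}, realize the Poisson boundary as the space of (ends, i.e.) infinite reduced words of the free product, and extract rigidity from the two-value constraint on the Radon--Nikodym derivative. But the proposal stops exactly where the work begins, and the two steps you defer to ``should yield strong constraints'' and ``should produce at least three distinct values'' are the whole content of the proof; moreover the second one is dubious as stated. Proposition~\ref{equal} constrains $c_0(\gamma,\xi)$ only for $\gamma$ in the support of $\mu$, i.e.\ for $\gamma\in S$; tracking the derivative along successive multiplications $s_1, s_1s_2,\dotsc$ only gives, via the cocycle relation, values in $\{e^{k\alpha}\}_{k\in\Z}$ for the products, and there is no immediate contradiction with a cycle. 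You need a mechanism that converts the two-value property \emph{for generators} into a combinatorial statement about the Cayley graph of $\Gamma_i$, and your sketch does not supply one.

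The paper's mechanism is the explicit formula (due to Dynkin--Malyutov and its descendants) for the Radon--Nikodym derivative in terms of first-hitting probabilities $q(a)=\PP(\exists n,\ X_n=a)$: for $a\in S_i$, $c_0(a,\xi)=q(a)$ if the first letter of $\xi$ lies outside $\Gamma_i$, and $c_0(a,\xi)=q(a\xi_1)/q(\xi_1)$ otherwise. Two-valuedness plus transience ($q<1$ off the identity) first forces $q(a)=e^{-\alpha}$ for all $a\in S$, then $q(ab)=e^{-2\alpha}=q(a)q(b)$ for $a,b\in S_i$ with $ab\neq e$. Since $\PP(\exists m<n,\ X_m=a,\ X_n=ab)=q(a)q(b)$ by the Markov property and this equals $q(ab)$, almost every path reaching $ab$ passes through $a$ first; hence every way of writing $ab$ as a word in $S_i$ has a prefix equal to $a$. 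An injective cycle $e,a_1,a_1a_2,\dotsc,e$ would let you write $a_1a_2=a_k^{-1}\dotsm a_3^{-1}$ avoiding $a_1$, a contradiction; so each Cayley graph is a tree, and uniformity of $\mu$ then follows because all neighbors of the origin in a regular tree have equal hitting probability $e^{-\alpha}$. None of this probabilistic input (the $q$-formula, the multiplicativity argument, the passage-through-$a$ lemma) appears in your proposal, and your alternative route to uniformity via stationarity of cylinder masses is likewise not carried out. As written, the proposal is an outline of the correct approach with the essential lemmas missing.
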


For instance, consider the modular group $\Gamma = \Z/2\Z * \Z/3\Z =
\{1,a\}*\{1,b,b^2\}$ and a symmetric probability measure $\mu_p =
p\delta_a + \frac{1-p}{2}(\delta_b + \delta_{b^2})$. Then, for all
$p\in(0,1)$, one has $h = \ell v$ ~\cite{MaMa} but the inequalities
are strict in Theorem~\ref{main_thm}. Together with the example of
$(\Z \wr \Z/3\Z) \times \F$ (see Example~\ref{example:h<lv} above),
this shows that equality in Theorem~\ref{main_thm} and in the
fundamental inequality $h\leq \ell v$ are independent.

As far as the free group $\F_d=\Z*\dotsm * \Z$ is concerned, the
above corollary says that the simple random walk is the only
symmetric nearest neighbor random walk for which equality holds in
Theorem~\ref{main_thm}.

\begin{proof}[Proof of Corollary~\ref{characterize}]
For $u\in \Sigma=\bigsqcup \Gamma_i\setminus \{e\}$, write
$\overline{u} = i$ if $u\in \Gamma_i$. A -- finite or infinite --
word $u_1 u_2 \dotsm $ over the alphabet $\Sigma$ is \emph{reduced}
if $\overline{u_i} \neq \overline{u_{i+1}}$. The group $\Gamma$ is
the set of finite reduced words over $\Sigma$ (the identity is the
empty word) endowed with the composition law which is the
concatenation with possible simplification at the contact point.

Denote by ${\mathcal E}(\Gamma)$ the \emph{space of ends} of
$\Gamma$. Let $\mu$ be a symmetric probability measure on $\Gamma$
with support equal to $S$. Then there exists a unique probability
measure $\nu_0$ on ${\mathcal E}(\Gamma)$ which is $\mu$-stationary,
and the space $({\mathcal E}(\Gamma),\nu_0)$ is (a realization of)
the Poisson boundary of $(\Gamma,\mu)$ (see~\cite{Woe89,Woe93} and
also~\cite{Kaim00}). The set $\partial \Gamma$ of right infinite
reduced words $\xi = \xi_1\xi_2 \dotsm$ over $\Sigma$ is a
$\Gamma$-invariant subset of ${\mathcal E}(\Gamma)$ with full
$\nu_0$-measure.

For $a\in \Gamma$, denote by $q(a) = \PP(\exists n, X_n=a)$ the
probability that the random walk ever reaches $a$. For $a\in \Sigma$
and $\xi = \xi_1\xi_2 \dotsm \in \partial \Gamma$, the Radon-Nikodym
derivative $c_0(a,\xi)=\frac{\dd a^{-1}_*\nu_0}{\dd \nu_0}(\xi)$
satisfies
\begin{equation}\label{cocycle_RN}
c_0(a,\xi)=\begin{cases} q(a) & \text{if } \overline{\xi_1} \neq \overline{a} \\
              q(a\xi_1)/q(\xi_1)  & \text{if } \overline{\xi_1} = \overline{a}
\end{cases}\quad,
\end{equation}
see~\cite{DyMa61},~\cite{ledr01} and~\cite{MaMa}.

\medskip

Assume that one of the inequalities of Theorem~\ref{main_thm} is an
equality. Proposition~\ref{equal} provides a real number $\alpha \geq
0$ such that $c_0(a,\xi) \in \{e^{\alpha},e^{-\alpha}\}$ for
$\mu\otimes\nu_0$-almost every $(a,\xi) \in \Sigma \times \partial
\Gamma$. Since $q(a)<1$ as the random walk on a free product is
transient, Equation~\eqref{cocycle_RN} implies that $\alpha > 0$ and
$q(a) = e^{-\alpha}$ for all $a\in S$.

Consider two elements $a,b\in S_i$ (possibly with $a=b$), with
$ab\not= e$. The second case in~\eqref{cocycle_RN} shows that
$q(ab)/q(b) \in \{e^\alpha, e^{-\alpha}\}$. Since $q(b) =
e^{-\alpha}$, this gives $q(ab) \in \{1, e^{-2\alpha}\}$. Since
$ab\not = e$ and the random walk is transient, we have $q(ab) < 1$,
hence $q(ab) = e^{-2\alpha}$. This gives
  \begin{align*}
  \PP(\exists m<n, X_m=a, X_n=ab) & = \PP(\exists m, X_m=a) \PP(\exists n, X_n = b)
  =q(a)q(b)
  \\ & = e^{-2\alpha} = q(ab) = \PP(\exists n, X_n = ab),
  \end{align*}
where we used the Markov property for the first equality. This shows
that almost every path from $e$ to $ab$ has to pass first through
$a$. Equivalently, whenever we write $ab$ as a product $s_1 \dotsc
s_n$ of elements of $S_i$, then some prefix $s_1 \dotsc s_m$ is equal
to $a$.

This implies that there is no nontrivial loop in the Cayley graph of
$\Gamma_i$ with respect to $S_i$: if there were such an injective
loop $e, a_1, a_1a_2, \dotsc, a_1a_2\dotsm a_{k-1}, a_1a_2\dotsm
a_{k-1}a_k=e$ (where all points but the first and last one are
distinct), then $a_1a_2=(a_3\dotsm a_{k})^{-1} = a_{k}^{-1}\dotsm
a_3^{-1}$. Since $S_i$ is symmetric, we have written $a_1 a_2$ as a
product of elements of $S_i$ that never reaches $a_1$ (since the loop
is injective), a contradiction. This shows that the Cayley graph of
$\Gamma_i$ with respect to $S_i$ is a regular tree, and therefore
that $\Gamma_i$ is a free product of finitely many factors $\Z$ and
$\Z/2\Z$.

The Cayley graph of $\Gamma$ with respect to $S$ is also a regular
tree. Since the probabilities of ever reaching any neighbor of the
origin are the same, so are the transition probabilities, hence $\mu$
is uniform on $\Sigma$.
\end{proof}

Corollary~\ref{characterize} characterizes the equality case for a
class of random walks on free groups, or more generally on some
virtually free groups. For hyperbolic groups, this is the only
situation where equality in our inequalities is possible:

\begin{cor}
\label{one-ended}
Let $\Gamma$ be a hyperbolic group which is not virtually free, and
let $\mu$ a finitely supported symmetric probability measure on
$\Gamma$ whose support generates $\Gamma$ as a semigroup. Then the
inequalities of Theorem~\ref{main_thm} are strict.
\end{cor}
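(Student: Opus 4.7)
The plan is to argue by contradiction using Proposition~\ref{equal} together with topological properties of the Gromov boundary of a non-virtually-free hyperbolic group. Suppose one of the inequalities of Theorem~\ref{main_thm} is an equality. By Proposition~\ref{equal}, there exists $\alpha\geq 0$ such that the Radon--Nikodym cocycle $c_0(\gamma,\xi)=\dd \gamma^{-1}_*\nu_0/\dd\nu_0(\xi)$ takes only the values $e^{\alpha}$ and $e^{-\alpha}$ for $\mu\otimes\nu_0$-a.e.\ $(\gamma,\xi)$. Finiteness of $S\coloneqq\supp\mu$ ensures this holds $\nu_0$-a.e.\ for every $\gamma\in S$. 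Because $\Gamma$ is non-elementary hyperbolic we have $\rho<1$, so Avez's inequality gives $h\geq -2\log\rho>0$, and in particular $\alpha>0$ (otherwise $c_0\equiv 1$ and $h=0$).

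The next step is to upgrade this ``a.e.'' statement to a pointwise one using the identification of the Poisson boundary of $(\Gamma,\mu)$ with the Gromov boundary $\partial\Gamma$ equipped with the harmonic measure $\nu_0$ (Kaimanovich), along with two standard inputs for a finitely supported $\mu$ on a hyperbolic group: $\nu_0$ has full topological support on $\partial\Gamma$, and for each $\gamma\in S$ the function $\xi\mapsto c_0(\gamma,\xi)$ admits a H\"older continuous representative on $\partial\Gamma$. The latter follows from Ancona's boundary Harnack inequalities and the ensuing description of $\nu_0$ as a quasiconformal measure for the Green metric on $\Gamma$ (Blach\`ere--Ha\"\i ssinsky--Mathieu). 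Together with Proposition~\ref{equal}, this yields $c_0(\gamma,\xi)\in\{e^{\alpha},e^{-\alpha}\}$ pointwise on $\partial\Gamma$ for every $\gamma\in S$. A continuous function to a two-point set is locally constant, so the level sets of $c_0(\gamma,\cdot)$ are clopen in $\partial\Gamma$.

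For a hyperbolic group, virtual freeness is equivalent to $\partial\Gamma$ being totally disconnected. Hence $\partial\Gamma$ contains a nontrivial connected component; passing if necessary to a one-ended vertex group $H$ in the Stallings--Dunwoody decomposition of $\Gamma$ over finite subgroups, one may take such a component $C$ to be the limit set of $H$, with $H$ non-elementary hyperbolic and $C$ both $H$-invariant and connected. On $C$, the locally constant $c_0(\gamma,\cdot)$ must be actually constant, for every $\gamma\in S$.

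The contradiction then comes from loxodromic dynamics on $C$. Pick a loxodromic $g\in H$ with fixed points $g^{\pm}\in C$. The cocycle identity $c_0(g^n,\xi)=\prod_{k=0}^{n-1}c_0(g,g^{-k}\xi)$ combined with the $g$-invariance of $C$ forces $c_0(g^n,g^+)=c_0(g^n,g^-)$ for every $n$. On the other hand, quasiconformality of $\nu_0$ with respect to the Green metric (equivalently, the fact that $-\log c_0$ is cohomologous to a positive multiple of the Busemann cocycle of the Green metric) yields $\log c_0(g^n,g^+)\to+\infty$ and $\log c_0(g^n,g^-)\to-\infty$ as $n\to\infty$, contradicting the previous equality. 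The main obstacle is the harmonic-analytic input---H\"older continuity of $c_0$ on $\partial\Gamma$ together with its identification with a Busemann cocycle---which is what converts the measure-theoretic two-value property into a topological obstruction; once these ingredients are in place, the purely topological fact that $\partial\Gamma$ is not totally disconnected does the rest.
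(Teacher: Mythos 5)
Your proof is correct and follows the same overall strategy as the paper's: invoke Proposition~\ref{equal}, identify the Poisson boundary with $(\partial\Gamma,\nu_0)$ via Ancona so that the cocycle has a continuous version with full-support non-atomic $\nu_0$, deduce that $c_0(\gamma,\cdot)$ is locally constant and hence constant on a nontrivial connected component $C$ of $\partial\Gamma$ (which exists exactly because $\Gamma$ is not virtually free), and then derive a contradiction from the dynamics of a loxodromic element whose fixed points lie in $C$. The one place where you genuinely diverge is the final contradiction: you appeal to the identification of $-\log c_0$ with a Busemann cocycle for the Green metric (Blach\`ere--Ha\"issinsky--Mathieu) to get $\log c_0(g^n,g^{\pm})\to\pm\infty$ along the axis, whereas the paper avoids this extra harmonic-analytic input entirely and argues elementarily: since $c_0(g,\cdot)$ equals a constant $c$ on a whole neighborhood $U$ of $C$, one gets $\nu_0(g^nV)=c^n\nu_0(V)$ for a small neighborhood $V$ of $g^+$, and north--south dynamics together with full support and non-atomicity of $\nu_0$ force $c<1$; the symmetric argument at $g^-$ forces $c>1$. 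Your route works but imports quasiconformality of the harmonic measure as an additional black box; the paper's only uses that $\nu_0$ has full support and no atoms. Two small points worth making explicit in your write-up: (i) the passage from ``$c_0(\gamma,\cdot)$ constant on $C$ for $\gamma\in S$'' to ``$c_0(g,\cdot)$ constant on $C$ for the loxodromic $g\in H$'' uses the cocycle identity together with local constancy of each factor on all of $\partial\Gamma$ (not just on $C$), since the intermediate translates $s_{i+1}\dotsm s_m\,C$ need not lie in $C$; and (ii) the detour through the Stallings--Dunwoody decomposition is not needed -- the paper cites Bowditch directly for the fact that any nontrivial component of $\partial\Gamma$ is the limit set of a quasi-convex, hence non-elementary hyperbolic, subgroup.
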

\begin{proof}
Let $\Gamma$ be a hyperbolic group. If $\mu$ is a finitely supported
probability measure on $\Gamma$, then it follows from~\cite{Ancona}
that the Poisson boundary and the Martin boundary of $(\Gamma,\mu)$
can be identified with $(\partial \Gamma,\nu)$ where $\partial
\Gamma$ is the geometric boundary of $\Gamma$ and $\nu$ is the unique
$\mu$-stationary measure on $\partial \Gamma$ (it has full support
and no atom). In particular, the Martin kernel $c(g,\xi)=\frac{\dd
g^{-1}_*\nu}{\dd \nu}(\xi)$ is well defined and continuous on
$\partial\Gamma$.

Assume that one of the inequalities of Theorem~\ref{main_thm} is an
equality. From Proposition~\ref{equal}, for any $g$ in the support of
$\mu$, the continuous function $\xi \mapsto c(g,\xi)$ can only take
two values for $\xi$ in the support of $\nu$, which is the whole
space $\partial\Gamma$. Writing any element of the group as a finite
product of elements in the support of $\mu$, it follows that $\xi
\mapsto c(g,\xi)$ only takes finitely many values, for any $g\in
\Gamma$.

Suppose now that $\Gamma$ is not virtually free. It follows that the
boundary of $\Gamma$ is not totally disconnected, and moreover the
stabilizer of any nontrivial component $L$ of the boundary is a
subgroup $\Lambda$ of $\Gamma$, which is quasi-convex and therefore
hyperbolic, with limit set equal to $L$ (see the discussion on top of
Page~55 in~\cite{bowditch_2} for all these facts). Since $L$ is
nontrivial, $\Lambda$ is non-elementary. In particular, it contains
an element $g$ of infinite order, which is hyperbolic. The attractive
and repulsive points $g^+$ and $g^-$ of $g$ both belong to $L$.

The function $\xi \mapsto c(g,\xi)$ is continuous and takes finitely
many values. It follows that it is constant on $L$, equal to some
$c>0$. It is even equal to $c$ on a small neighborhood $U$ of $L$.

Let $V\subset U$ be a small neighborhood of $g^+$. Since $\nu$ has
full support, $\nu(V)>0$. As $\frac{\dd g^{-1}_*\nu}{\dd \nu}(\xi)=c$
on $V$, we get $c=\nu(gV)/\nu(V)$. Iterating $n$ times this equation,
we obtain $c^n=\nu(g^n V)/\nu(V)$. As $g^n V$ is attracted to $g^+$
and $\nu$ has no atom, we deduce that $c^n < 1$ for large enough $n$,
hence $c<1$. Arguing in the same way using $g^{-1}$ around $g^-$, we
get $c>1$. This is a contradiction.
\end{proof}

\section{Boundaries, and proofs of the main inequalities}

\label{sec:proofs}

In this section, we prove the two main inequalities of
Theorem~\ref{main_thm}. The proof can be equivalently given inside
the group (following the ideas of Ledrappier
in~\cite{ledrappier_sharp_entropy}), or using boundaries. We will
mainly use the latter point of view, since it allows for more
transparent and intrinsic arguments. Moreover, it gives more insights
about the equality case in our inequalities. Nevertheless, in the
first subsection, we will quickly sketch the proof inside the group,
for the sake of completeness and since it can motivate some
definitions on the boundary.

In this section, $\Gamma$ will always be a countable group with a
proper left-invariant distance, and $\mu$ a symmetric probability
measure on $\Gamma$ whose support generates $\Gamma$, with finite
entropy.

\subsection{Proofs inside the group}
\label{subsec:proofs_inside}

In this paragraph, we sketch proofs of the inequalities of
Theorem~\ref{main_thm} by arguing inside the group, following
Ledrappier~\cite{ledrappier_sharp_entropy}. We start with the
estimate involving $\ell$.

Let $L(n) = \sum \lgth{g} \mu^{* n}(g)$ be the average length at time
$n$ and $H(n) = -\sum \mu^{*n}(g) \log \mu^{* n}(g)$ the entropy at
time $n$. Their averages converge respectively to $\ell$ and $h$. If
one could compare (a function of) $L(n+1)-L(n)$ with $H(n+1)-H(n)$,
an inequality involving $\ell$ and $h$ would follow. It is possible
to estimate conveniently those quantities if $\mu(e)>0$ (this is one
of the assumptions in~\cite{EK12}) -- otherwise, one can replace
$\mu$ with $(\mu+\delta_e)/2$. However, this leads to suboptimal
inequalities.

A more efficient procedure, used by
Ledrappier~\cite{ledrappier_sharp_entropy}, is to consider a
poissonized version of the random walk, in continuous time, where
jumps along the trajectories of the initial random walk occur
according to a Poisson distribution. This ensures that, from time $t$
to $t+\epsilon$, there is a positive probability to stay at the same
place, even when $\mu(e)=0$. Formally, define probability measures
$\mu_t = e^{-t} \sum_{n=0}^\infty \frac{t^n}{n!} \mu^{* n}$, they
have the same entropy and drift as the sequence $\mu^{* n}$, i.e.,
$H(\mu_t)/t\to h$ and $L(\mu_t)/t\to \ell$. If $P$ denotes the Markov
operator associated to $\mu$, one has $\mu_t = e^{t(P-I)} \delta_e$.
Differentiating with respect to $t$, one gets $\mu_t'(x)=
((P-I)\mu_t)(x) = \sum_g (\mu_t(gx)-\mu_t(x)) \mu(g)$. This gives a
formula for the derivative of the entropy:
  \begin{equation*}
  H(\mu_t)'=-\sum_{x,g} \mu(g)(\mu_t(gx)-\mu_t(x)) (\log \mu_t(x) + 1).
  \end{equation*}
One would like to use this quantity to dominate functions of the
derivative of the drift, but this expression is not convenient to do
so since some terms in the sum can be negative, and one should take
care of subtle cancellations. Lemma 3
in~\cite{ledrappier_sharp_entropy} uses the symmetry of the measure
$\mu$ to rewrite the above formula, using a symmetrization procedure,
as
  \begin{equation}
  \label{eq:derive_Hmut}
  H(\mu_t)' = \frac{1}{2} \sum_{x,g} \mu(g)(\mu_t(gx)-\mu_t(x))(\log \mu_t(gx)-\log \mu_t(x)),
  \end{equation}
where the terms are all nonnegative.

The derivative of the drift $L(\mu_t)$ is given by
  \begin{equation*}
  L(\mu_t)' = \sum \lgth{x} \mu_t'(x) = \sum_{x,g} \lgth{x} \mu(g) (\mu_t(gx) - \mu_t(x)).
  \end{equation*}
It is clear that the derivative of the drift should be bounded by the
first moment of the measure, but this is not apparent from this
formula. However, using the symmetrization lemma of Ledrappier, one
gets
  \begin{equation}
  \label{eq:Lmut_prime}
  L(\mu_t)' = \frac{1}{2} \sum_{x,g} (\lgth{x}-\lgth{gx}) \mu(g) (\mu_t(gx) - \mu_t(x)),
  \end{equation}
where boundedness becomes more apparent. This formula is more suited
to computations. Indeed, let us estimate $\lgth{x}-\lgth{gx}$ by
$\lgth{g}$ and let us use Cauchy-Schwarz inequality with respect to
the measure $\mu(g)\mu_t(x)$ on $\Gamma\times \Gamma$, this yields a
bound
  \begin{equation*}
  \abs{L(\mu_t)'} \leq
  \frac{M_2(\mu)}{2} \left(\sum_{x,g} \mu(g) \mu_t(x) \left( \frac{\mu_t(gx)}{\mu_t(x)}-1\right)^2\right)^{1/2}.
  \end{equation*}
The latter sum has a flavor that is similar
to~\eqref{eq:derive_Hmut}, that can also be written as
  \begin{equation*}
  H(\mu_t)' =
  \frac{1}{2} \sum_{x,g} \mu(g) \mu_t(x) \left(\frac{\mu_t(gx)}{\mu_t(x)}-1\right)
  \log \left(\frac{\mu_t(gx)}{\mu_t(x)}\right).
  \end{equation*}
However, it is not possible to compare directly those two quantities
using Jensen's inequality: the problem is that the value of $(
\mu_t(gx)/\mu_t(x)-1)^2$ does not determine the value of $\abs{\log
(\mu_t(gx)/\mu_t(x))}$, since the symmetries of those quantities are
not the same (additive symmetry around $1$ for the former,
multiplicative symmetry around $1$ for the latter).

The solution to this problem is to estimate~\eqref{eq:Lmut_prime}
using Cauchy-Schwarz inequality with respect to a different
probability measure on $\Gamma\times \Gamma$, that is more symmetric
in a sense, namely $\mu(g) \cdot \frac{\mu_t(gx)+\mu_t(x)}{2}$. The
resulting bound for $L(\mu_t)'$ is
  \begin{equation*}
  \abs{L(\mu_t)'} \leq
  M_2(\mu) \left(\sum_{x,g} \mu(g) \frac{\mu_t(gx)+\mu_t(x)}{2}
    \left(\frac{\mu_t(gx)-\mu_t(x)}{\mu_t(gx)+\mu_t(x)}\right)^2\right)^{1/2}.
  \end{equation*}
The last factor in this expression can be written as
$(c-1)^2/(c+1)^2$ for $c=\mu_t(gx)/\mu_t(x)$; it is invariant under
the symmetry $c\mapsto c^{-1}$, just like $\abs{\log c}$. It follows
that this bound for $L(\mu_t)'$ can be compared to $H(\mu_t)'$,
applying Jensen's inequality to a suitable convex function, with
respect again to the probability measure $\mu(g) \cdot
\frac{\mu_t(gx)+\mu_t(x)}{2}$ on $\Gamma \times \Gamma$. The
inequality~\eqref{eq:main_ell} follows. The full details will be
given later on, in the proof using boundaries.

To prove the estimate involving $\rho$, one uses the function $f_t:
x\mapsto \mu_t(x)^{1/2}$ (which has unit norm in $\ell^2(\Gamma)$).
We have $\langle P f_t, f_t\rangle \leq \rho$ since $\rho$ is the
spectral radius of $P$ acting on $\ell^2(\Gamma)$. Hence,
  \begin{equation*}
  \sum_{x,g} \mu(g) \mu_t(gx)^{1/2} \mu_t(x)^{1/2} \leq \rho.
  \end{equation*}
This expression can not be directly compared
to~\eqref{eq:derive_Hmut}. One should instead use the (equivalent)
inequality $\langle (I-P)f_t, f_t\rangle \geq 1-\rho$: here, the
scalar product can be again written using the symmetrization lemma,
yielding
  \begin{equation}
  \label{eq:rho_inside}
  \frac{1}{2}\sum_{x,g} \mu(g) (\mu_t(gx)^{1/2} -\mu_t(x)^{1/2})^2 \geq 1-\rho.
  \end{equation}
Again, this expression has the same flavor as~\eqref{eq:derive_Hmut},
and can be compared to it using Jensen's inequality for a good convex
function and the probability measure $\mu(g) \cdot
\frac{\mu_t(gx)+\mu_t(x)}{2}$ that ensures the right symmetry of the
integrand. This is the only point of the argument where we depart
from Ledrappier, who instead relied on the elementary inequality
$(a-b) (\log a - \log b) \geq 4 (a^{1/2}-b^{1/2})^2$ (Lemma~2
in~\cite{ledrappier_sharp_entropy}), which readily gives $1-\rho \leq
\frac{1}{4} H(\mu_t)'$ thanks to~\eqref{eq:rho_inside}
and~\eqref{eq:derive_Hmut}. Again, details will be given later on
using boundaries.

In the next sections, we describe the same proofs, but using
boundaries. The poissonization procedure will not be needed, and
there will be no limit over $t$, all the computations will be direct.
This implies that the equality case in our inequalities can be
characterized, making it possible to prove Proposition~\ref{equal}.

\subsection{A symmetrization lemma}

It follows from the above proof inside the group that the two crucial
points are the symmetrization procedure (Lemma 3
in~\cite{ledrappier_sharp_entropy}) that makes it possible to always
manipulate nonnegative quantities, and the use of the symmetrized
measure $\mu(g) \cdot \frac{\mu_t(gx)+\mu_t(x)}{2}$ in the
inequalities of Cauchy-Schwarz and Jensen. In this subsection, we
describe the analogues of those tools in a general
$(\Gamma,\mu)$-space.

Let $(\mathcal{B},\nu)$ be a $(\Gamma,\mu)$-space, i.e., a
probability space endowed with a $\Gamma$-action for which $\nu$ is
stationary. The Radon-Nikodym cocycle
  \[
  c(\gamma,\xi)=\frac{\dd\gamma^{-1}_*\nu}{\dd \nu}(\xi)
  \]
allows us to define a measure on $\Gamma\times \mathcal{B}$:
  \[
  \dd m=\frac{c+1}{2}\dd \mu \dd \nu.
  \]
(It is the analogue of $\mu(g) \cdot \frac{\mu_t(gx)+\mu_t(x)}{2}$.)
One checks, by means of a change of variables, that $m$ is indeed a
probability measure; in fact, for every $\gamma$,
$\frac{c(\gamma,\xi)+1}{2} \dd \nu(\xi)$ is a probability measure on
$\mathcal{B}$. Moreover, since $\mu$ is symmetric, the measure $m$ is
invariant under the `flip' involution $(\gamma,\xi)\mapsto
(\gamma^{-1},\gamma\xi)$.

The following symmetrization lemma is the analogue of~\cite[Lemma
3]{ledrappier_sharp_entropy}. The term ``symmetrization'' comes from
the fact that the expression on the right hand side
of~\eqref{wxomicuvoiu} does not change under the flip involution. It
relies crucially on the symmetry of the measure $\mu$. Here and
throughout, we will write
  \begin{equation*}
  d(\gamma,\xi)=\frac{1-c(\gamma,\xi)}{1+c(\gamma,\xi)} \in (-1,1).
  \end{equation*}
Most quantities will be conveniently expressed in terms of $d$. In
particular, $c = (1-d)/(1+d)$. If $c$ is replaced by its inverse,
then $d$ is replaced by its opposite. Hence, quantities that are
invariant under the symmetry $c\mapsto c^{-1}$ give rise to even
functions when they are expressed in terms of $d$.

\begin{lem} \label{lemsym}
Consider an additive cocycle $f:\Gamma\times \mathcal{B} \to \R$,
i.e., a function satisfying $f(\gamma \gamma',\xi) = f(\gamma,
\gamma'\xi) + f(\gamma',\xi)$. If $f$ is integrable with respect to
$\dd\mu \dd\nu$, then
  \begin{equation}
  \label{wxomicuvoiu}
  \int_{\Gamma \times \mathcal{B}} f(\gamma,\xi)\dd \mu(\gamma)\dd \nu(\xi)
  =\int_{\Gamma \times \mathcal{B}} f(\gamma,\xi)d(\gamma,\xi) \dd m(\gamma,\xi).
  \end{equation}
\end{lem}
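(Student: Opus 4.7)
The plan is to reduce the identity to the statement $\int f(\gamma,\xi)\,c(\gamma,\xi)\dd\mu(\gamma)\dd\nu(\xi) = -\int f(\gamma,\xi)\dd\mu(\gamma)\dd\nu(\xi)$, and then to establish this by combining three ingredients: the defining property of the Radon-Nikodym cocycle $c$, the symmetry of $\mu$, and the cocycle identity for $f$.

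First, I would simplify the right-hand side of~\eqref{wxomicuvoiu}. Since $d(\gamma,\xi)=(1-c(\gamma,\xi))/(1+c(\gamma,\xi))$, one has $d(\gamma,\xi)\dd m(\gamma,\xi) = \frac{1-c(\gamma,\xi)}{2}\dd\mu(\gamma)\dd\nu(\xi)$. Therefore proving~\eqref{wxomicuvoiu} is equivalent to showing that
\[
\int_{\Gamma\times\mathcal{B}} f(\gamma,\xi)\bigl(1+c(\gamma,\xi)\bigr)\dd\mu(\gamma)\dd\nu(\xi) = 0,
\]
i.e., $\int fc\,\dd\mu\dd\nu = -\int f\,\dd\mu\dd\nu$.

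Second, I would rewrite $\int fc\,\dd\mu\dd\nu$ using that $c(\gamma,\cdot)\dd\nu = \dd\gamma^{-1}_*\nu$, so that for each fixed $\gamma$,
\[
\int_{\mathcal{B}} f(\gamma,\xi)\,c(\gamma,\xi)\dd\nu(\xi) = \int_{\mathcal{B}} f(\gamma,\gamma^{-1}\xi)\dd\nu(\xi).
\]
Integrating in $\gamma$ and then substituting $\gamma\mapsto\gamma^{-1}$ (legitimate because $\mu$ is symmetric) yields
\[
\int_{\Gamma\times\mathcal{B}} f(\gamma,\xi)c(\gamma,\xi)\dd\mu(\gamma)\dd\nu(\xi)
= \int_{\Gamma\times\mathcal{B}} f(\gamma^{-1},\gamma\xi)\dd\mu(\gamma)\dd\nu(\xi).
\]

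Third, I would invoke the cocycle property. Applying it to $\gamma'=e$ gives $f(e,\xi)=0$, and applying it to the pair $(\gamma^{-1},\gamma)$ gives $0=f(e,\xi)=f(\gamma^{-1},\gamma\xi)+f(\gamma,\xi)$, hence $f(\gamma^{-1},\gamma\xi)=-f(\gamma,\xi)$ for every $\gamma,\xi$. Plugging this in concludes the argument. The only real care needed is to justify Fubini and the change of variables under the $L^1$ assumption on $f$, which I would handle by first running the argument for $|f|$ (so that $fc$ is integrable with respect to $\dd\mu\dd\nu$) and then for $f$; this is where the minor bookkeeping obstacle lies, but no obstacle of substance arises, since the manipulations are pointwise identities together with a measure-preserving change of variables.
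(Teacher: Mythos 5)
Your argument is correct and is essentially the paper's own proof, just packaged differently: the paper applies the substitution $\gamma\mapsto\gamma^{-1}$ and the cocycle identity to the left-hand side and then takes the half-sum of the two resulting expressions for $\int f\dd\mu\dd\nu$ in terms of $\dd m$, which is algebraically the same as your reduction to $\int fc\,\dd\mu\dd\nu=-\int f\dd\mu\dd\nu$. Your remark about first running the computation for $\abs{f}$ to justify the integrability of $fc$ is a sensible (and easily completed) addition.
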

\begin{proof}
This easy computation goes as follows. By the change of variable
$g=\gamma^{-1}$ and the symmetry of $\mu$, we have
  \[
  \int f(\gamma,\xi) \dd \mu(\gamma) \dd \nu(\xi) = \int f(g^{-1},\xi) \dd \mu(g)\dd \nu(\xi).
  \]
The cocycle relation $f(gg',\xi)=f(g,g'\xi)+f(g',\xi)$ implies that
$f(g^{-1},\xi)=-f(g,g^{-1}\xi)$. The change of variable
$\eta=g^{-1}\xi$ gives
  \[
  \int f \dd \mu \dd \nu = - \int  f(g,\eta) \dd(g^{-1}_*\nu)(\eta) \dd \mu(g)= \int  f \frac {-2c}{1+c}\dd m.
  \]
On the other hand, we have of course
  \[
  \int f \dd \mu \dd \nu = \int f \frac{2}{1+c}\dd m.
  \]
The half-sum of these two relations gives the desired result.
\end{proof}

\subsection{The Poisson boundary, proof of the first main inequality}

In this paragraph, we prove the first inequality~\eqref{eq_main_rho}
of our main theorem, relating $\rho$ and $h$. The proof relies on the
action of $(\Gamma,\mu)$ on its \emph{Poisson boundary}
$(\mathcal{B}_0,\nu_0)$, that we described quickly in
Section~\ref{sec:examples} (see~\cite{kaim_vershik, Fur02} for more
details). Let $c_0$, $d_0$ and $m_0$ be the objects defined above,
attached to the Poisson boundary.

Kaimanovich and Vershik~\cite{kaim_vershik} proved the following
formula for the entropy:
  \begin{equation}
  \label{eq:entropy_Poisson}
  h = -\int_{\Gamma\times \mathcal{B}_0 } \log c_0 \dd \mu \dd \nu_0.
  \end{equation}
Since the Radon-Nikodym derivative $c_0$ is a multiplicative cocycle,
the symmetrization lemma~\ref{lemsym} applies:
  \[
  h= - \int_{\Gamma\times \mathcal{B}_0 } \log c_0 \cdot d_0 \dd m_0.
  \]
We have $c_0 = \frac{1-d_0}{1+d_0}$ by definition of $d_0$. Hence,
writing
  \begin{equation}
  \label{eq:def_F}
  F(x) = 2x \argth(x) = x \log\left(\frac{1+x}{1-x} \right) =
  \sum_{n\geq 1} \frac{2}{2n-1}x^{2n},
  \end{equation}
we get the following expression for the entropy:
  \begin{equation}
  \label{entropy_Poisson}
  h= \int_{\Gamma\times \mathcal{B}_0 } F(d_0) \dd m_0.
  \end{equation}
Note that the function $F$ is even.

We will now get a bound from below for the spectral radius of the
random walk, using an object living on the Poisson boundary.

\begin{lem}\label{rho-c}
One has
  \begin{equation}
  \label{wpxoucvmkljxwcv}
  \rho \geq \int_{\Gamma \times \mathcal{B}_0 } c_0^{1/2} \dd \mu \dd \nu_0.
  \end{equation}
\end{lem}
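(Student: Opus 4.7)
The plan is to realize the right-hand side of~\eqref{wpxoucvmkljxwcv} as a diagonal matrix coefficient of a unitary representation of $\Gamma$ on $L^2(\mathcal{B}_0,\nu_0)$, and then bound its norm by $\rho = \|\lambda(\mu)\|_{\ell^2(\Gamma)}$ using the weak containment of this representation in the left regular representation $\lambda$.

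Concretely, the multiplicative cocycle identity $c_0(\gamma_1\gamma_2,\xi) = c_0(\gamma_1,\gamma_2\xi)\,c_0(\gamma_2,\xi)$ together with the change-of-variables identity $c_0(\gamma^{-1},\xi)\,\dd\nu_0(\xi) = \dd\gamma_*\nu_0(\xi)$ allow one to define
\[
(\pi(\gamma)\phi)(\xi) = c_0(\gamma^{-1},\xi)^{1/2}\,\phi(\gamma^{-1}\xi), \qquad \phi \in L^2(\mathcal{B}_0,\nu_0).
\]
A short computation (cocycle identity for the homomorphism property, change of variable for isometry) shows that $\pi$ is a unitary representation of $\Gamma$. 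Setting $\pi(\mu) = \sum_\gamma \mu(\gamma)\pi(\gamma)$ and testing against the constant function $\mathbf{1}$, which has unit $L^2(\nu_0)$-norm, I would obtain
\[
\langle \pi(\mu)\mathbf{1},\mathbf{1}\rangle = \sum_\gamma \mu(\gamma)\int c_0(\gamma^{-1},\xi)^{1/2}\,\dd\nu_0(\xi) = \int_{\Gamma\times\mathcal{B}_0} c_0^{1/2}\,\dd\mu\,\dd\nu_0,
\]
where the last equality uses the symmetry of $\mu$ to relabel $\gamma \mapsto \gamma^{-1}$.

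The crucial step is then $\|\pi(\mu)\|_{L^2(\nu_0)} \leq \rho$. For this I would invoke the theorem of Zimmer and Kaimanovich that the $\Gamma$-action on its Poisson boundary is amenable, which implies weak containment of $\pi$ in $\lambda$, whence $\|\pi(\mu)\| \leq \|\lambda(\mu)\|_{\ell^2(\Gamma)} = \rho$. A Cauchy--Schwarz then closes the argument:
\[
0 \leq \int c_0^{1/2}\,\dd\mu\,\dd\nu_0 = \langle\pi(\mu)\mathbf{1},\mathbf{1}\rangle \leq \|\pi(\mu)\|\cdot\|\mathbf{1}\|^2 \leq \rho.
\]

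The expected main obstacle is precisely this weak containment step, which relies on the nontrivial fact that the Poisson boundary is an amenable $\Gamma$-space. A more hands-on alternative, at the cost of reintroducing limits, would take the finitely supported unit vectors $f_n(g) = \mu^{*n}(g)^{1/2} \in \ell^2(\Gamma)$, use the elementary bound $\langle Pf_n, f_n\rangle \leq \rho$, and pass to the limit in $n$ via the almost-sure convergence of Radon--Nikodym cocycles $\mu^{*n}(g X_n)/\mu^{*n}(X_n) \to c_0(g^{-1},\xi)$ along trajectories (Derriennic's theorem) combined with dominated convergence; this is essentially the translation through the Poisson boundary of the finite-time argument sketched in Section~\ref{subsec:proofs_inside}, and the representation-theoretic approach is preferred precisely because it avoids these limiting procedures.
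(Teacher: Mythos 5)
Your proposal is correct and is essentially the paper's own second proof of Lemma~\ref{rho-c}: the authors define exactly this quasi-regular representation $\pi$, compute $\langle\pi(\mu)\mathbf{1},\mathbf{1}\rangle$ using the symmetry of $\mu$, and bound $\norm{\pi(\mu)}\leq\norm{\pireg(\mu)}=\rho$ via Zimmer's amenability of the Poisson boundary action and Kuhn's weak containment theorem. The ``hands-on alternative'' you sketch with $f_n(g)=\mu^{*n}(g)^{1/2}$ and the martingale convergence of $\PP(C_g\given\boF_n)$ is precisely the paper's first proof, so both of your routes appear verbatim in the source.
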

\begin{arxivversion}
We give two proofs of this lemma, an elementary one that is mainly
done inside the group, and a more conceptual one written directly on
the boundary.
\begin{proof}[First proof, inside the group]
\end{arxivversion}
\begin{submittedversion}
\begin{proof}
\end{submittedversion}
Let us define a function $f_n$ on $\Gamma$ by $f_n(x) = \mu^{*
n}(x)^{1/2}$. It has unit norm in $L^2$. Denoting by $P$ the Markov
operator associated to the random walk, we get:
  \begin{equation}
  \label{eq_rho_below}
  \begin{split}
  \rho &\geq \langle Pf_{n-1}, f_n \rangle
  =\sum_x\sum_g \mu(g) f_{n-1}(xg) f_n(x)
  =\sum_y \sum_g \mu(g) f_{n-1}(g^{-1}y) f_n(y)
  \\&
  =\sum_y \sum_g \mu(g) \left(\frac{ \mu^{*n-1}(g^{-1}y)}{\mu^{*n}(y)}\right)^{1/2} \mu^{*n}(y).
  \end{split}
  \end{equation}
Let $\PP$ denote the probability distribution of the random walk on
the space $\Omega$ of trajectories starting from the identity. Write
$\omega_n$ for the position at time $n$ of a trajectory, $\boF_n$ for
the $\sigma$-algebra generated by $\omega_n, \omega_{n+1},\dotsc$.
and $C_g$ for the set of trajectories with $\omega_1=g$. Then
  \begin{equation*}
  \frac{\mu(g) \mu^{*n-1}(g^{-1}\omega_n)}{\mu^{*n}(\omega_n)}
  =\PP(C_g \given \boF_n)(\omega).
  \end{equation*}
This converges almost surely to $\PP(C_g \given
\boF_\infty)(\omega)$, where $\boF_\infty =\bigcap \boF_n$ is the
tail $\sigma$-algebra. The Poisson boundary $(\mathcal{B}_0, \nu_0)$
is the quotient of $(\Omega, \PP)$ by $\boF_\infty$. Denoting by
$\bnd:\Omega\to \mathcal{B}_0$ the quotient map, we deduce that
$\mu^{*n-1}(g^{-1}\omega_n)/\mu^{*n}(\omega_n)$ converges almost
surely to a function of $\bnd(\omega)$, which is in fact $c_0(g,
\bnd(\omega))$ (see~\cite[Paragraph 3.2]{Kaim00}). This function is
bounded from above (by $\mu(g)^{-1}$), hence convergence in $L^1$
follows. We obtain
  \begin{align*}
  \sum_y \left(\frac{ \mu^{*n-1}(g^{-1}y)}{\mu^{*n}(y)}\right)^{1/2} \mu^{*n}(y)
  &=\int_\Omega \bigl(\mu(g)^{-1}\PP(C_g \given \boF_n)(\omega)\bigr)^{1/2} \dd \PP(\omega)
  \\&
  \to \int_\Omega c_0(g,\bnd(\omega))^{1/2} \dd\PP(\omega)
  = \int_{\mathcal{B}_0} c_0(g, \xi)^{1/2} \dd\nu_0(\xi).
  \end{align*}
The result follows from this convergence and~\eqref{eq_rho_below}.
\end{proof}

\begin{submittedversion}
\begin{rmk}
A ``more abstract'' proof of Lemma ~\ref{rho-c} can also be obtained
by using the fact that since the action of the group $\Gamma$ on the
Poisson boundary $(\mathcal{B}_0,\nu_0)$ is amenable ~\cite[Cor
5.3]{Zim78}, the associated quasi-regular representation $\pi$ of
$\Gamma$ is weakly contained in the regular representation $\pireg$
~\cite{Kuh94}. Denote by $\pireg(\mu)$ and $\pi(\mu)$ the averages of
$\pireg$ and $\pi$ with respect to $\mu$. Now, the right hand side of
inequality ~\ref{wpxoucvmkljxwcv} is precisely $\langle
\pi(\mu)1,1\rangle$, whereas by the above weak containment property
$\|\pi(\mu)\|\le \|\pireg(\mu)\|=\rho$.
\end{rmk}
\end{submittedversion}

\begin{arxivversion}
\begin{proof}[Second proof, on the boundary]
By a theorem of Zimmer~\cite[Cor 5.3]{Zim78}, the -- ergodic --
action of the discrete group $\Gamma$ on its Poisson boundary is
amenable in the sense of Zimmer. The precise definition of this
notion will not be important for us, we will only use the following
consequence.

Consider the following two unitary representations of $\Gamma$: the
regular representation $\pireg$ defined on $\ell^2(\Gamma)$ by
\[
(\pireg(\gamma)f)(g)=f(\gamma^{-1} g)
\]
and the representation $\pi$ defined on $L^2(\mathcal{B}_0,\nu_0)$ by
  \[
  (\pi(\gamma)f)(\xi)=c_0(\gamma^{-1},\xi)^{1/2}f(\gamma^{-1} \xi).
  \]
Denote by $\pireg(\mu)$ and $\pi(\mu)$ the averages of $\pireg$ and
$\pi$ with respect to $\mu$, namely:
\[
\pireg(\mu)=\sum_{\gamma\in\Gamma}\mu(\gamma)\pireg(\gamma)
        \quad \text{and} \quad \pi(\mu)=\sum_{\gamma\in\Gamma}\mu(\gamma)\pi(\gamma).
\]
Since the representations $\pireg$ and $\pi$ are unitary and the
measure $\mu$ is symmetric, the operators $\pireg(\mu)$ and
$\pi(\mu)$ are self-adjoint. The operator $\pireg(\mu)$ is just the
Markov operator $P$ associated to the random walk on $\Gamma$.

A theorem of Kuhn~\cite{Kuh94} (valid for ergodic amenable actions)
implies that the representation $\pi$ is weakly contained in the
regular representation $\pireg$. We deduce that the operator
$\pi(\mu)$ has norm less than or equal to the norm of $\pireg(\mu)$,
which is, by a result of Kesten~\cite{Kes59}, exactly the spectral
radius $\rho$. If we consider the scalar product $\langle \pi(\mu)1,1
\rangle$ in $L^2(\mathcal{B}_0,\nu_0)$, we have:
  \[
  \rho = \norm{\pireg(\mu)} \geq \norm{\pi(\mu)} \geq \langle \pi(\mu)1,1 \rangle
                         =\int_{\Gamma \times \mathcal{B}_0 } c_0(\gamma^{-1},\xi)^{1/2} \dd \mu \dd \nu_0
                         =\int_{\Gamma \times \mathcal{B}_0 } c_0^{1/2} \dd \mu \dd \nu_0
  \]
again since the measure $\mu$ is symmetric. This is the desired
result.
\end{proof}
\end{arxivversion}

Since $\dd m_0 = \frac{c_0+1}{2} \dd\mu\dd\nu_0$, the integral
in~\eqref{wpxoucvmkljxwcv} is equal to
  $
  \int_{\Gamma \times \mathcal{B}_0 } \frac{2c_0^{1/2}}{1+c_0} \dd m_0
  $.
We rewrite this expression in terms of $d_0$: since
$c_0=(1-d_0)/(1+d_0)$, we have
  \begin{equation*}
  2c_0^{1/2}\cdot \frac{1}{1+c_0} = 2 \left(\frac{1-d_0}{1+d_0}\right)^{1/2} \cdot \frac{1}{2/(1+d_0)}
  =(1-d_0^2)^{1/2}.
  \end{equation*}
Therefore,
  \begin{equation}
  \label{eq:ineq_rho}
  1-\rho \leq 1 - \int_{\Gamma \times \mathcal{B}_0 } \frac{2c_0^{1/2}}{1+c_0}\dd m_0
  = \int_{\Gamma \times \mathcal{B}_0 } G(d_0) \dd m_0,
  \end{equation}
where $G(x) = 1- (1-x^2)^{1/2}$. This function is even on $[-1,1]$,
its restriction to $[0,1]$ is an increasing bijection of $[0,1]$.

\begin{lem}
\label{lem_etude_FGmoinsun}
The function $F\circ G^{-1}$ satisfies on $[0,1)$
  \begin{equation*}
  F\circ G^{-1}(x) = (2x-x^2)^{1/2} \log\left(\frac{1+(2x-x^2)^{1/2}}{1-(2x-x^2)^{1/2}}\right)
  =\sum_{n=1}^\infty c_n x^n,
  \end{equation*}
where $c_1 = 4$ and $(2n-1) c_n = (n-2)c_{n-1}+2$ for $n\geq 2$. In
particular, the coefficients $c_n$ are positive. Hence, $F\circ
G^{-1}$ is increasing and convex.
\end{lem}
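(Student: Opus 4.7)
The plan is to derive a first-order ODE satisfied by $\phi \coloneqq F \circ G^{-1}$ and then read off the coefficients by matching powers of $y$.

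I would start by inverting $G$. Squaring the relation $y = 1 - \sqrt{1-x^2}$ gives $x^2 = 2y - y^2$, so $G^{-1}(y) = \sqrt{2y - y^2}$ on $[0,1]$; substituting this into $F(u) = u\log\frac{1+u}{1-u}$ yields the announced closed form. Although $G^{-1}$ has a square-root branch point at $y = 0$, the expansion $F(u) = \sum_{n \geq 1} \frac{2}{2n-1} u^{2n}$ involves only even powers of $u$, so
\[
\phi(y) = \sum_{n \geq 1} \frac{2}{2n-1}(2y - y^2)^n
\]
is a genuine power series in $y$. Since $2y - y^2 = 1 - (1-y)^2 \in [0,1)$ for $y \in [0,1)$ and $\sum \frac{2}{2n-1} w^n$ converges for $|w| < 1$, the resulting series $\phi(y) = \sum_{n \geq 1} c_n y^n$ converges throughout $[0,1)$.

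To identify the $c_n$, I differentiate via the chain rule. With $u = G^{-1}(y)$, we have $u^2 = y(2-y)$, $1-u^2 = (1-y)^2$, and $u u'(y) = 1-y$; combined with $F'(u) = \log\frac{1+u}{1-u} + \frac{2u}{1-u^2}$ and the identity $\log\frac{1+u}{1-u} = F(u)/u = \phi(y)/u$, a short computation produces the ODE
\[
y(2-y)\phi'(y) = (1-y)\phi(y) + \frac{2y(2-y)}{1-y}.
\]
Expanding the last term as a geometric series gives $\frac{2y(2-y)}{1-y} = 4y + 2\sum_{n \geq 2} y^n$; matching coefficients of $y^n$ on both sides of the ODE yields $2c_1 = c_1 + 4$ (hence $c_1 = 4$) and, for $n \geq 2$, $2n c_n - (n-1)c_{n-1} = c_n - c_{n-1} + 2$, which simplifies to the announced recurrence $(2n-1)c_n = (n-2)c_{n-1} + 2$.

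Positivity is then an immediate induction: given $c_{n-1} > 0$ with $n \geq 2$, the quantity $(n-2)c_{n-1} + 2$ is at least $2$, so $c_n > 0$. Once all $c_n$ are positive, $\phi'$ and $\phi''$ have positive Taylor coefficients on $[0,1)$, so $\phi$ is increasing and convex there. The only conceptually subtle point is the analyticity of $\phi$ at $0$ in spite of the branch point of $G^{-1}$, which rests on the evenness of $F$; everything else is direct calculus and a one-line induction.
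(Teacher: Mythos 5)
Your proof is correct and follows essentially the same route as the paper: you derive the same first-order ODE (the paper writes it as $H'(x) = \frac{1-x}{x(2-x)}H(x) + \frac{2}{1-x}$, which after multiplication by $x(2-x)$ is exactly your equation), and then identify Taylor coefficients to obtain $c_1=4$ and $(2n-1)c_n=(n-2)c_{n-1}+2$. Your additional remarks on the analyticity of $\phi$ at $0$ despite the branch point of $G^{-1}$, and the explicit induction for positivity, only make explicit what the paper leaves as ``a simple computation.''
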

\begin{proof}
A simple computation shows that the function $H=F\circ G^{-1}$
satisfies the differential equation
  \begin{equation*}
  H'(x) = \frac{1-x}{x(2-x)}H(x) + \frac{2}{1-x}.
  \end{equation*}
Multiplying by $x(2-x)$ and identifying the Taylor coefficients on
the left and on the right, one gets the recurrence relation $(2n-1)
c_n = (n-2)c_{n-1}+2$ for $n\geq 2$.
\end{proof}

The map $F\circ G^{-1}$ is increasing, so the
inequality~\eqref{eq:ineq_rho} transforms into
  \begin{equation}
  \label{eq_jensen}
  F\circ G^{-1}(1- \rho) \leq
  F\circ G^{-1} \left( \int_{\Gamma \times \mathcal{B}_0 } G(d_0) \dd m_0 \right).
  \end{equation}
Note that the partial inverse $G^{-1}$ of $G$ satisfies $F\circ
G^{-1} \circ G=F$ on the interval $(-1,1)$, because both $F$ and $G$
are even functions. Since $F\circ G^{-1}$ is convex by
Lemma~\ref{lem_etude_FGmoinsun}, Jensen's inequality implies that
  \[
  F\circ G^{-1}(1- \rho) \leq \int_{\Gamma \times \mathcal{B}_0 } F(d_0) \dd m_0.
  \]
Thanks to~\eqref{entropy_Poisson}, this proves~\eqref{eq_main_rho}
since $G^{-1}(1-\rho)=\sqrt{1-\rho^2}$. \qed

\subsection{The Busemann compactification, proof of the second main inequality}
\label{subsec_horocomp}

For the proof of the second inequality~\eqref{eq:main_ell} of our
main theorem, relating $\ell$ and $h$, we will need another more
geometric boundary, which will give us access to the metric notion of
linear drift, in contrast to the Poisson boundary which is purely a
measure theoretic construction.

We recall the construction of the Busemann (horospherical) closure of
the group $\Gamma$. It is obtained by embedding $\Gamma$ into
Lipschitz functions on $\Gamma$ using the distance kernel, as
follows. Let $X \subset \R^\Gamma$ be the set of $1$-Lipschitz
real-valued functions on $\Gamma$ which vanish on $e$. Lipschitz
means here that $\abs{\phi(gg')-\phi(g)}\leq \lgth{g'}$. For any
$\gamma \in \Gamma$,
  \[
  \Phi_\gamma(g)=\lgth{\gamma^{-1}g}-\lgth{\gamma^{-1}}
  \]
defines an element of $X$, and the assignment $\gamma \mapsto
\Phi_\gamma$ is continuous, injective. Let $\mathcal{B}_1$ be the
closure of the image of $\Gamma$. The action of $\Gamma$ on
$\mathcal{B}_1$ is given by
  \[
  (\gamma \xi)(g)=\xi(\gamma^{-1}g)-\xi(\gamma^{-1}).
  \]
The latter equation for the action is better understood if one thinks
of $X$ as the quotient set of $1$-Lipschitz functions on $\Gamma$
modulo the constants, endowed with the natural translation action on
functions. Each element of $X$ has a unique representative which
vanishes at $e$, which explains the above formula.

\medskip

Karlsson and Ledrappier~\cite{KaLe07}, \cite[Thm 18]{KaLe11} proved
that in this setting, under the assumption of finite first moment,
there exists an ergodic stationary probability measure $\nu_1$ on
$\mathcal{B}_1$ satisfying:
  \[
  \ell=\int_{\Gamma \times \mathcal{B}_1} \xi(\gamma^{-1}) \dd \mu(\gamma) \dd \nu_1(\xi).
  \]
\cite{KaLe11} calls this expression for $\ell$ a
Furstenberg-Khasminskii formula.

By definition of the action, the assignment $\beta: (g,\xi) \mapsto
\xi(g^{-1})$ satisfies
  \[
  \beta(gg',\xi)=\xi(g'^{-1}g^{-1})=(g'\xi)(g^{-1})+ \xi({g'}^{-1})=\beta(g,g'\xi)+\beta(g',\xi),
  \]
so it is an additive cocycle; this is in fact the classical Busemann
cocycle. Hence, the symmetrization lemma~\ref{lemsym} applies, and we
find
  \begin{equation}
  \label{formula_for_ell}
  \ell=\int_{\Gamma \times \mathcal{B}_1} \beta \cdot d_1 \dd m_1,
  \end{equation}
where $d_1=(1-c_1)/(1+c_1)$ with $c_1$ the Radon-Nikodym derivative,
and $m_1=\frac{1+c_1}2 \dd \mu \dd \nu_1$.

\medskip

Kaimanovich and Vershik~\cite{kaim_vershik} proved that the boundary
entropy of a $(\Gamma,\mu)$-space is always less than or equal to the
entropy of the random walk. Applying this result to $(\mathcal{B}_1,
\nu_1)$, we get
  $
  - \int_{\Gamma \times \mathcal{B}_1} \log c_1 \dd \mu \dd \nu_1 \leq h
  $.
The left hand side can be transformed using the symmetrization
lemma~\ref{lemsym}, giving
  \begin{equation}
  \label{eq:ineq_h_B1}
  \int_{\Gamma\times \mathcal{B}_1 } F(d_1) \dd m_1 \leq h.
  \end{equation}

\medskip

We can now prove our second main inequality~\eqref{eq:main_ell}
comparing $\ell$ and $h$. We start from~\eqref{formula_for_ell} and
apply Cauchy-Schwarz inequality, yielding
  \begin{equation}
  \label{eq:after_CS}
  \ell \leq \left(\int_{\Gamma\times \mathcal{B}_1 } \abs{\beta}^2 \dd m_1\right)^{1/2}
  \left( \int_{\Gamma\times \mathcal{B}_1 } d_1^2 \dd m_1 \right)^{1/2}.
  \end{equation}
Since $\abs{\beta(g,\xi)}\leq \lgth{g}$, because $\mathcal{B}_1$
consists of 1-Lipschitz functions vanishing at $e$, the first factor
on the right hand side is bounded by $M_2(\mu)$. Writing $\tilde\ell
= \ell/M_2(\mu)$, we obtain
  \begin{equation*}
  \tilde\ell^2 \leq \int_{\Gamma\times \mathcal{B}_1 } d_1^2 \dd m_1.
  \end{equation*}
It follows from the Taylor expansion of the function $F$, given
in~\eqref{eq:def_F}, that $\tilde F(x) = F(x^{1/2})$ is convex on
$[0,1)$. Applying $\tilde F$ to the previous inequality and using
Jensen inequality, we get
  \begin{equation*}
  F(\tilde \ell) \leq \int_{\Gamma\times \mathcal{B}_1 } F(d_1) \dd m_1.
  \end{equation*}
By~\eqref{eq:ineq_h_B1}, the right hand side is bounded by $h$. This
proves~\eqref{eq:main_ell}. \qed

\medskip

The above proof can be refined, to get a slightly stronger
inequality. For any $p\geq 1$, let $M_p(\mu) = \left(\sum_g
\lgth{g}^p \mu(p)\right)^{1/p}$ be the $\ell^p$-norm with respect to
the measure $\mu$ of the distance to the identity, generalizing the
notation $M_2(\mu)$.

\begin{prop}
\label{prop_stronger}
Let $\mu$ be a symmetric probability measure with finite first moment
on a countable group $\Gamma$ with a proper left-invariant distance.
Then
\begin{equation*}
   \sum_{n=1}^\infty \frac{2}{2n-1} \left(\frac{\ell}{M_{1+1/(2n-1)}(\mu)}\right)^{2n}
  \leq h.
  \end{equation*}
\end{prop}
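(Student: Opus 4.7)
The plan is to revisit the proof of inequality~\eqref{eq:main_ell} and refine it by applying H\"older's inequality once for each term in the Taylor expansion
\[
F(x) = 2x \argth(x) = \sum_{n=1}^\infty \frac{2}{2n-1} x^{2n},
\]
rather than applying Cauchy--Schwarz a single time and thereby using only the quadratic term. The ingredients are all in hand: the Karlsson--Ledrappier formula~\eqref{formula_for_ell}
\[
\ell = \int_{\Gamma \times \mathcal{B}_1} \beta \cdot d_1 \dd m_1,
\]
the pointwise bound $\abs{\beta(g,\xi)} \leq \lgth{g}$ coming from the fact that $\mathcal{B}_1$ consists of $1$-Lipschitz functions vanishing at $e$, and the Kaimanovich--Vershik estimate $\int F(d_1) \dd m_1 \leq h$ from~\eqref{eq:ineq_h_B1}.

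Fix $n \geq 1$ and set $q_n = 2n$, $p_n = 1 + 1/(2n-1)$, so that $1/p_n + 1/q_n = 1$. H\"older's inequality applied to the integral representation of $\ell$ gives
\[
\ell \leq \left(\int_{\Gamma \times \mathcal{B}_1} \abs{\beta}^{p_n} \dd m_1 \right)^{1/p_n} \left( \int_{\Gamma \times \mathcal{B}_1} \abs{d_1}^{q_n} \dd m_1 \right)^{1/q_n}.
\]
Since $\abs{\beta(g,\xi)}^{p_n} \leq \lgth{g}^{p_n}$ and, for each fixed $g$, the measure $\frac{1+c_1(g,\cdot)}{2}\dd\nu_1$ is a probability measure on $\mathcal{B}_1$ (because $c_1(g,\cdot)$ is a Radon--Nikodym derivative of total integral $1$), the first factor is bounded by $M_{p_n}(\mu)$. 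Raising to the $q_n$-th power yields the key estimate
\[
\left(\frac{\ell}{M_{p_n}(\mu)}\right)^{2n} \leq \int_{\Gamma \times \mathcal{B}_1} d_1^{2n} \dd m_1,
\]
valid for each $n$, with the convention that the left-hand side is $0$ when $M_{p_n}(\mu) = \infty$.

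To conclude, I will multiply by $2/(2n-1)$ and sum over $n$. Since the integrands $d_1^{2n}$ are all nonnegative, monotone convergence allows me to interchange sum and integral, giving
\[
\sum_{n=1}^\infty \frac{2}{2n-1}\left(\frac{\ell}{M_{p_n}(\mu)}\right)^{2n} \leq \int_{\Gamma \times \mathcal{B}_1} F(d_1) \dd m_1 \leq h,
\]
which is exactly the claim. I do not anticipate any serious obstacle: the only delicate points are the justification of the sum--integral exchange and the treatment of possibly infinite moments $M_{p_n}(\mu)$, both handled trivially by nonnegativity. The conceptual content is simply that the Cauchy--Schwarz step in the proof of~\eqref{eq:main_ell} is wasteful, as it collapses all coefficients of $F$ into the single coefficient of $x^2$; applying H\"older tuned to each monomial of $F$ separately extracts strictly more from the same machinery, and gives back~\eqref{eq:main_ell} as the $n=1$ term after using $M_{p_n}(\mu) \leq M_2(\mu)$ for all $n$ whenever $M_2(\mu) < \infty$.
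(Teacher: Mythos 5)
Your proposal is correct and follows essentially the same route as the paper's own proof: H\"older's inequality with conjugate exponents $1+1/(2n-1)$ and $2n$ applied to the Karlsson--Ledrappier formula~\eqref{formula_for_ell}, the bound $\abs{\beta}\leq\lgth{\cdot}$ to control the first factor by $M_{1+1/(2n-1)}(\mu)$, and summation against the Taylor coefficients of $F$ followed by~\eqref{eq:ineq_h_B1}. The extra remarks on monotone convergence and on infinite moments are correct and only make explicit what the paper leaves implicit.
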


In this estimate, the first terms of the expansion vanish if the
corresponding moments are infinite. This proposition gives a
nontrivial estimate when $\mu$ has a finite moment of some order
$p>1$. In particular, if $\mu$ has a moment of order $1+1/(2n-1)$, we
get
  \begin{equation*}
  \ell \leq M_{1+1/(2n-1)}(\mu) \left(\frac{2n-1}{2} h\right)^{1/(2n)}.
  \end{equation*}
If $h=0$ for such a measure, it follows that $\ell=0$. This is a weak
version of a theorem of Karlsson and Ledrappier~\cite{KaLe07},
stating that this implication holds for symmetric measures with a
finite moment of order $1$ (the symmetry assumption can even be
replaced by a weaker centering assumption).

Note that, since $M_p(\mu) \leq M_2(\mu)$ for $p\leq 2$ and $F(x) =
\sum\frac{2}{2n-1}x^{2n}$, this proposition strengthens the
inequality~\eqref{eq:main_ell}.

\begin{proof}[Proof of Proposition~\ref{prop_stronger}]
Let $n\geq 1$ be an integer. We start again
from~\eqref{formula_for_ell}, but we use H\"older inequality for the
exponent $1+1/(2n-1)$ and the conjugate exponent $2n$: it follows
that the drift $\ell$ satisfies
  \begin{equation*}
  \ell \leq
  \left( \int_{\Gamma \times \mathcal{B}_1} \abs{\beta}^{1+1/(2n-1)} \dd m_1 \right)^{(2n-1)/(2n)}
  \left( \int_{\Gamma \times \mathcal{B}_1} d_1^{2n} \dd m_1 \right)^{1/(2n)}.
  \end{equation*}
Since $\abs{\beta(g,\xi)}\leq \lgth{g}$, the first factor is bounded
by $M_{1+1/(2n-1)}(\mu)$. Thus,
  \[
  \left( \frac{\ell}{M_{1+1/(2n-1)}(\mu)} \right)^{2n}
  \leq \int_{\Gamma \times \mathcal{B}_1} d_1^{2n} \dd m_1.
  \]
Note that the previous equation makes sense even if $\mu$ has no
finite moment of order $1+1/(2n-1)$ (in this case, the left hand size
vanishes, and the equation is trivial).

Multiplying this inequality by $2/(2n-1)$ and summing over $n$, we
obtain
  \begin{equation*}
  \sum_{n\geq 1} \frac{2}{2n-1} \left( \frac{\ell}{M_{1+1/(2n-1)}(\mu)} \right)^{2n}
  \leq \int_ {\Gamma \times \mathcal{B}_1} \sum_{n\geq 1} \frac{2}{2n-1}d_1^{2n} \dd m_1
  = \int_ {\Gamma \times \mathcal{B}_1} F(d_1) \dd m_1.
  \end{equation*}
By~\eqref{eq:ineq_h_B1}, this is at most $h$.
\end{proof}

\subsection{Discussion of the equality case}
\label{subsec_equality}

The proofs given in the previous paragraphs imply that the equality
situation in those inequalities is very rigid. We can use this
information to prove Proposition~\ref{equal}.

\begin{proof}[Proof of Proposition~\ref{equal}]
Assume first that the inequality~\eqref{eq_main_rho} comparing $\rho$
and $h$ is an equality. Then all the inequalities in the proof of
this inequality have to be equalities. In particular, Jensen's
inequality after~\eqref{eq_jensen} is an equality, whence $G(d_0)$ is
almost surely constant, i.e., there exists $a\in \R$ such that
$d_0=\pm a$ almost surely. Since $c_0=(1-d_0)/(1+d_0)$, it follows
that $c_0$ almost surely takes the values $(1-a)/(1+a)$ or
$(1+a)/(1-a)$, which are inverse of each other.

Assume now that the inequality~\eqref{eq:main_ell} comparing $\ell$
and $h$ is an equality. Denote by $(\mathcal{B}_1, \nu_1)$ the
Busemann compactification used in Paragraph~\ref{subsec_horocomp}. We
have the inequalities
  \begin{multline}
  \label{eq_multline}
  F\left(\frac{\ell}{M_2(\mu)}\right) = \sum \frac{2}{2n-1} \left(\frac{\ell}{M_2(\mu)}\right)^{2n}
  \leq \sum \frac{2}{2n-1} \left(\frac{\ell}{M_{1+1/(2n-1)}(\mu)}\right)^{2n}
  \\
  \leq \int_{\Gamma\times \mathcal{B}_1} F(d_1) \dd m_1
  = -\int_{\Gamma \times \mathcal{B}_1}\log c_1 \dd \mu \dd\nu_1
  \leq h.
  \end{multline}
If the extreme terms are equal, we have equality everywhere.

All the moments of $\mu$ coincide, hence $\mu$ is supported on points
at a fixed distance of $e$. There must also be equality $m_1$-almost
everywhere in the inequality $\abs{\beta(g,\xi)}\leq \lgth{g}$ that
we used just after~\eqref{eq:after_CS}. This implies that
$\abs{\beta}$ is almost surely constant. Finally, there is equality
in the Cauchy-Schwarz inequality~\eqref{eq:after_CS}, hence, $d_1$ is
almost surely proportional to $\beta$. It follows that $\abs{d_1}$ is
almost surely constant. Hence, as in the first case, $c_1$ takes only
two values which are inverse of each other. To conclude, we should
prove that this property (that we have proved on
$(\mathcal{B}_1,\nu_1)$) also holds on the Poisson boundary, since
the statement of the proposition is formulated on the Poisson
boundary.

Since equality holds everywhere in~\eqref{eq_multline}, one has in
particular $-\int _{\Gamma\times \mathcal{B}_1} \log c_1 \dd \mu
\dd\nu_1= h$, i.e., the entropy of the $(\Gamma,\mu)$-space
$(\mathcal{B}_1, \nu_1)$ is maximal. By~\cite[Theorem
3.2]{kaim_vershik}, this implies that $(\mathcal{B}_1, \nu_1)$ is the
Poisson boundary if the Radon-Nikodym cocycle separates the points,
i.e., if for almost every points $\xi\not=\eta$ there exists $g\in
\Gamma$ such that $c_1(g,\xi)\not= c_1(g,\eta)$. In general, the
Poisson boundary is a factor of $(\mathcal{B}_1,\nu_1)$, obtained by
identifying the points that are not separated by the Radon-Nikodym
cocycle. In particular, any property of the Radon-Nikodym cocycle
that is true on $(\mathcal{B}_1,\nu_1)$ is also true on the Poisson
boundary. This concludes the proof.
\end{proof}

\begin{arxivversion}

\section{Lower bounds involving both the spectral radius and the drift}
\label{sec:chebyshev}

Combining both inequalities in Theorem~\ref{main_thm}, one can obtain
other inequalities involving at the same time $\rho$, $\ell$ and $h$,
including notably the following corollary.
\begin{cor}
\label{cor_chebyshev}
Let $A(x) = (1+x)\log(1+x)+(1-x)\log(1-x)$. Then any symmetric random
walk with finite second moment on a countable group satisfies
  \begin{equation*}
  A(\ell/M_2(\mu)) + 2\abs{\log \rho}\leq h.
  \end{equation*}
\end{cor}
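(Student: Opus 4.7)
The key observation is the algebraic identity
\[
A(x) = 2x\argth(x) + \log(1-x^2) = F(x) + \log(1-x^2),
\]
which splits $A$ into an ``entropy-like'' piece $F$ (the function of~\eqref{eq:def_F}) and a ``spectral-like'' piece $\log(1-x^2)$. This suggests running the two halves of the proof of Theorem~\ref{main_thm} simultaneously on the Poisson boundary $(\mathcal{B}_0,\nu_0)$, so as to estimate both $\ell$ and $\rho$ against the same measure $m_0$.

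First I would transport the drift formula from the Busemann boundary to the Poisson boundary: since $(\mathcal{B}_1,\nu_1)$ is a factor of $(\mathcal{B}_0,\nu_0)$, pulling back the Busemann cocycle yields an additive cocycle $\tilde\beta$ on $\Gamma \times \mathcal{B}_0$ with $\abs{\tilde\beta(g,\xi)} \leq \lgth{g}$ and $\ell = \int \tilde\beta \dd\mu \dd\nu_0$. Lemma~\ref{lemsym} and Cauchy--Schwarz with respect to $m_0$ (exactly as in Section~\ref{subsec_horocomp}) then yield $\tilde\ell^{\,2} \leq \int_{\Gamma\times\mathcal{B}_0} d_0^2 \dd m_0$. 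The Taylor series of $A\circ\sqrt{\cdot}$ has only positive coefficients on $[0,1)$, so this function is convex and increasing; combining Jensen's inequality with the evenness of $A$ gives $A(\tilde\ell) \leq \int A(d_0) \dd m_0$. Using the splitting above together with the entropy formula $h = \int F(d_0) \dd m_0$ of~\eqref{entropy_Poisson}, this rearranges to
\[
A(\tilde\ell) - \int_{\Gamma\times\mathcal{B}_0} \log(1-d_0^2) \dd m_0 \leq h.
\]

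It remains to bound the integral above using Lemma~\ref{rho-c}: the identity $\frac{2 c_0^{1/2}}{1+c_0} = \sqrt{1-d_0^2}$ rewrites that lemma as $\rho \geq \int \sqrt{1-d_0^2} \dd m_0$, and Jensen's inequality applied to the concave logarithm gives $2\log\rho \geq \int \log(1-d_0^2) \dd m_0$, i.e., $2\abs{\log\rho} \leq -\int \log(1-d_0^2) \dd m_0$. Substituting into the previous display yields the claim. The only point requiring genuine verification is that the drift admits an integral representation on the Poisson boundary via the pullback of the Busemann cocycle; once that is granted, the entire argument is a single chain combining the symmetrization lemma, Cauchy--Schwarz, two Jensen inequalities and the algebraic identity $A = F + \log(1-(\cdot)^2)$.
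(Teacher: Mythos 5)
Your chain of estimates is internally consistent --- the identity $A(x)=F(x)+\log(1-x^2)$, the positivity of the Taylor coefficients of $t\mapsto A(\sqrt{t})$, and the two applications of Jensen are all correct --- but it rests entirely on the step you yourself flag as ``requiring genuine verification'', and that step is a real gap, not a formality. To pull the Busemann cocycle back to $(\mathcal{B}_0,\nu_0)$ you need a $\Gamma$-equivariant measurable map $\mathcal{B}_0\to\mathcal{B}_1$ sending $\nu_0$ to $\nu_1$, i.e., you need the Karlsson--Ledrappier stationary space $(\mathcal{B}_1,\nu_1)$ to be a $\mu$-boundary. Nothing in the paper asserts this, and it is not automatic: a stationary $(\Gamma,\mu)$-space need not be a factor of the Poisson boundary, which is exactly why Section~\ref{subsec_horocomp} works on $\mathcal{B}_1$ directly and only uses the \emph{inequality} $-\int\log c_1\,\dd\mu\,\dd\nu_1\leq h$ rather than the entropy \emph{formula}~\eqref{eq:entropy_Poisson}. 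Without this transport you cannot place the drift estimate and the spectral radius estimate on the same measure $m_0$, and the combined Jensen argument does not get off the ground. (Trying instead to move Lemma~\ref{rho-c} over to $\mathcal{B}_1$ runs into the symmetric obstruction: its proof uses the identification of $\mathcal{B}_0$ with the tail boundary, or amenability of the Poisson action, neither of which is available on $(\mathcal{B}_1,\nu_1)$.)

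The paper sidesteps all of this: Corollary~\ref{cor_chebyshev} is a purely formal consequence of the two scalar inequalities $F(\sqrt{1-\rho^2})\leq h$ and $F(\tilde\ell)\leq h$ of Theorem~\ref{main_thm}. Setting $s=F^{-1}(h)$, monotonicity of $F$ gives $\tilde\ell\leq s$ and $\sqrt{1-\rho^2}\leq s$, hence $-2\log\rho\leq-\log(1-s^2)$, and then $A(\tilde\ell)-2\log\rho\leq A(s)-\log(1-s^2)=F(s)=h$ --- your identity again, but applied to the numbers $\tilde\ell,\rho,h$ rather than to the integrand $d_0$. Deploying the identity at the level of the final quantities removes any need for the two bounds to live on a common boundary. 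If you wish to keep a boundary-level argument, you would first have to prove that $\nu_1$ is a $\mu$-boundary (or otherwise represent $\ell$ as an integral of an additive cocycle over $\mathcal{B}_0$ dominated by $\lgth{\cdot}$), which is a substantial additional claim.
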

\begin{proof}
Let $F(x)=2x\argth(x)$, Theorem~\ref{main_thm} gives
$F(\sqrt{1-\rho^2}) \leq h$ and $F(\tilde \ell) \leq h$ with
$\tilde\ell=\ell/M_2(\mu)$. Writing $s=F^{-1}(h)$, one has
$\sqrt{1-\rho^2} \leq s$, hence $\rho^2 \geq 1-s^2$, hence $-2\log
\rho \leq -\log(1-s^2)$. Since $A$ is increasing, we obtain
  \begin{align*}
  A(\tilde\ell) -2\log \rho
  &\leq A(s) -\log(1-s^2)
  \\&
  =(1+s) \log(1+s)+(1-s)\log(1-s) - \log(1+s) -\log(1-s)
  \\&
  = s \log\left(\frac{1+s}{1-s}\right)
  = F(s)=h.
  \qedhere
  \end{align*}
\end{proof}

Note that this statement improves both Avez and Carne-L{\oe}uillot
inequalities that we explained in the introduction. Surprisingly, for
nearest neighbor random walks, we found a direct (and completely
different) proof of this result, relying on properties of Chebyshev
polynomials and on large deviation estimates for the simple random
walk on $\Z$, inspired by the techniques of Carne~\cite{Carne85}. The
function $A$ appears naturally in this proof as a large deviations
rate. Since the argument is interesting in its own right, we will
explain it in the rest of this section.

\medskip

We will always write $A$ for the function in the statement of the
corollary. Let $\Gamma$ be a countable group with a proper
left-invariant distance, and let $\mu$ be a symmetric measure
supported on $B(e,1)$, we want to prove that it satisfies $A(\ell) +
2 \abs{\log\rho}\leq h$.

Consider the Hilbert space $\ell^2(\Gamma)$, with its scalar product
$\langle \cdot ,\cdot \rangle$. The Markov operator $P_{\mu}$
associated to $\mu$ is defined by $P_{\mu}f(g) = \sum_{h\in
\Gamma}\mu(h)f(gh)$. It is a contraction on $\ell^2(\Gamma)$, and its
iterates are given by $P^n_{\mu} = P_{\mu^{*n}}$.

Since the measure $\mu$ is symmetric, the operator $P_{\mu}$ is
self-adjoint, therefore its spectrum $\sigma (P_{\mu})$ is real and
contained in the interval $[-1,1]$. Moreover, the spectral radius of
$P_{\mu}$ is given by
\[
\rho (P_{\mu}) = \sup_{\lambda \in \sigma (P_{\mu})}\abs{\lambda} = \norm{P_{\mu}} = \rho.
\]
The second equality holds for every self-adjoint operator.
See~\cite{Kes59} for the last equality.

\medskip

If $K \subset \Gamma$, we write $\I_K$ for the indicator function of
$K$. It belongs to $\ell^2(\Gamma)$ when $K$ is finite. If $K
=\{g\}$, we simply write $\I_g$. We have, for every $g\in \Gamma$,
  \begin{equation*}
  \langle P_{\mu}^n \I_e, \I_g \rangle
  = (P_{\mu}^n \I_e) (g)
  = \sum_{h\in \Gamma}\mu^{*n}(h) \I_e (gh)
  = \mu^{*n}(g^{-1}) = \mu^{*n}(g)
  \end{equation*}
since $\mu$ is symmetric. More generally, for every $K \subset
\Gamma$,
  \begin{equation*}
  \langle P_{\mu}^n \I_e,\I_K \rangle = \mu^{*n}(K).
  \end{equation*}

\begin{lem}\label{KL} Let $(T_k(X))_k$ be the sequence of Chebyshev polynomials
and let $(S_n)_n$ be the simple random walk on $\Z$. Then:
  \begin{enumerate}
    \item for every $n\in \N$, one has $
        X^n=\sum_{k=0}^{n}\PP(\abs{S_n}=k)T_k(X)$;
    \item for every self-adjoint operator $u$ of a Hilbert space
        with unit norm, $\norm{T_k(u)}=1$ for all $k\in\N$;
    \item for every $k,n\in \N$ such that $0\leq k\leq n$, one
        has $\PP(S_n \geq k)\leq\exp\bigl[-\frac{n}{2}
        A\bigl(k/n\bigr)\bigr]$.
  \end{enumerate}
\end{lem}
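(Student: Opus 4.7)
The three items are essentially independent and each is classical; the plan is mainly to identify the right tool for each and check that the normalizations line up. None of them hides a substantive obstacle.

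For (i), the strategy is to reduce to a trigonometric identity. Both sides are polynomials in $X$ of degree at most $n$, so it is enough to verify the identity on the set $X=\cos\theta$, $\theta\in\R$. Using the defining relation $T_k(\cos\theta)=\cos(k\theta)$ and the binomial expansion
\[
\cos^n\theta=\left(\frac{e^{i\theta}+e^{-i\theta}}{2}\right)^{\!n}
=\sum_{j=0}^n 2^{-n}\binom{n}{j}\,e^{i(n-2j)\theta},
\]
I would recognize $2^{-n}\binom{n}{j}=\PP(S_n=n-2j)$, i.e.\ $\cos^n\theta=\E[e^{iS_n\theta}]$. The symmetry $S_n\sim -S_n$ then turns this into $\E[\cos(S_n\theta)]$; regrouping terms by the value $k=\abs{S_n}\in\{0,\dots,n\}$ yields $\sum_{k=0}^n\PP(\abs{S_n}=k)\cos(k\theta)$, which is the claimed sum once $\cos(k\theta)$ is rewritten as $T_k(\cos\theta)$.

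For (ii), I would invoke the spectral theorem: for a self-adjoint operator $u$ with $\|u\|=1$, $\sigma(u)\subset[-1,1]$ and meets $\{-1,+1\}$, and $\|p(u)\|=\sup_{\lambda\in\sigma(u)}\abs{p(\lambda)}$ for any polynomial $p$. The bound $\abs{T_k(x)}\leq 1$ on $[-1,1]$ is immediate from $T_k(\cos\theta)=\cos(k\theta)$, and $\abs{T_k(\pm 1)}=1$, so both inequalities $\|T_k(u)\|\leq 1$ and $\|T_k(u)\|\geq 1$ follow at once.

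For (iii), the method is a Chernoff bound. For every $t\geq 0$,
\[
\PP(S_n\geq k)\leq e^{-tk}\,\E[e^{tS_n}]=e^{-tk}(\cosh t)^n.
\]
The exponent $n\log\cosh t-tk$ is minimized at $\tanh t=k/n$, i.e.\ $t=\argth(k/n)$. Using $\cosh(\argth x)=1/\sqrt{1-x^2}$ and $\argth x=\tfrac{1}{2}\log\tfrac{1+x}{1-x}$, the optimized exponent becomes
\[
-\tfrac{n+k}{2}\log(1+k/n)-\tfrac{n-k}{2}\log(1-k/n)=-\tfrac{n}{2}A(k/n),
\]
which is the stated bound. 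The only mildly delicate point in the entire lemma is this last algebraic rearrangement; the boundary case $k=n$ works as well, since the bound then degenerates to $2^{-n}=\PP(S_n=n)$.
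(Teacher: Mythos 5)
Your proposal is correct and follows essentially the same route as the paper: item (1) via the binomial expansion of $\cos^n\theta$ interpreted as $\E[e^{iS_n\theta}]$ and folded by symmetry, item (2) via the spectral mapping theorem together with $T_k([-1,1])\subset[-1,1]$ and $\abs{T_k(\pm1)}=1$, and item (3) via the Chernoff bound $e^{-tk}(\cosh t)^n$ optimized at $t=\argth(k/n)$. The only differences are cosmetic — you carry out the optimization and check the boundary case $k=n$ explicitly, where the paper simply asserts that $\sup_{t>0}\{tx-\log\cosh t\}=A(x)/2$.
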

\begin{proof}
(1) This is~\cite[Thm.~2]{Carne85}. We recall Carne's proof in order
to be complete. Set $x = \cos t $. Then:
\begin{align*}
x^n & =  \frac{1}{2^n}\bigl(e^{it}+e^{-it}\bigr)^n=\sum_{k=-n}^{n}\PP(S_n=k)e^{ikt}
      =  \sum_{k=0}^{n}\PP(\abs{S_n}=k)\frac{e^{ikt}+e^{-ikt}}{2}\\
    & =  \sum_{k=0}^{n}\PP(\abs{S_n}=k) \cos kt
      =  \sum_{k=0}^{n}\PP(\abs{S_n}=k) T_k(\cos t)
      =  \sum_{k=0}^{n}\PP(\abs{S_n}=k) T_k(x) .
\end{align*}
(2) The Chebyshev polynomials satisfy $T_k([-1,1]) = [-1,1]$ and
$\abs{T_k(\pm 1)}=1$. Moreover, since $T_k$ is real and $u$
self-adjoint, the operator $T_k(u)$ is also self-adjoint. If
$\norm{u}=1$, then $\sigma(u) \subset [-1,1]$, hence $\sigma (T_k(u))
= T_k(\sigma(u)) \subset [-1,1]$. We have
\[
\norm{T_k(u)} = \sup\{\abs{\lambda},\lambda \in \sigma (T_k(u))\}
              = \sup\{\abs{T_k(\lambda)},\lambda \in \sigma (u)\}=1.
\]
(3) This is a standard Chernov type estimate. For every real $t > 0$,
we have, using Markov inequality,
\[
\PP(S_n \geq k) = \PP(e^{tS_n} \geq e^{tk})
                \leq e^{-tk}\E(e^{tS_n})
                = e^{-tk}(\cosh t)^n = \exp \biggl[-n\biggl(t\frac{k}{n} - \log\cosh t\biggr)\biggr] .
\]
An elementary computation shows that, for $x\in [0,1]$,
$\sup\{tx -\log\cosh t,t > 0\}=A(x)/2$. The result follows. Observe that the
function $A$ appears as twice the Legendre transform of the function
$\log\cosh$, hence is convex.
\end{proof}

Recall that, for every $K \subset \Gamma$, we have
$\mu^{*n}(K)=\langle P_{\mu}^n \I_e, \I_K \rangle$. Applying Item (1)
of lemma~\ref{KL}, we get
\begin{equation}\label{decouplage}
\frac{1}{\rho^n}\mu^{*n}(K)
  = \left\langle \left(\frac{1}{\rho}P_{\mu}\right)^n \I_e, \I_K \right\rangle
  = \sum_{k=0}^{n}\PP(\abs{S_n}=k)\left\langle
           T_k\left(\frac{1}{\rho}P_{\mu}\right) \I_e, \I_K \right\rangle.
\end{equation}

What remains to do is to apply this formula to a suitable sequence of
finite subsets of $\Gamma$. Fix $\epsilon > 0$. Let $K_n \subset
\Gamma$ be defined by
\[
K_n=\{g\in \Gamma \st \abs{g} \in [\ell(1-\epsilon)n,\ell(1+\epsilon)n]
      \text{ and } \mu^{*n}(g) \in [e^{-h(1+\epsilon)n},e^{-h(1-\epsilon)n}]\}.
\]
Recall that we denote by $(X_n)_n$ (a realization of) the right
random walk associated with $(\Gamma,\mu)$. Using Kingman's
subadditive ergodic theorem~\cite{Derrien80}, one can prove that, as
$n\rightarrow +\infty$, $\abs{X_n}/n\rightarrow \ell$ and
$-\log\mu^{*n}(X_n)/n\rightarrow h$ almost surely, hence in
probability. Therefore, $\lim_n \mu^{*n}(K_n) = 1$. In particular,
taking $n$ large enough, one has $\mu^{*n}(K_n) \geq 1-\epsilon$.

\medskip

Denote by $\#K$ the cardinality of a set $K\subset \Gamma$. We have
\[
1 \geq \mu^{*n}(K_n) \geq \#K_ne^{-h(1+\epsilon)n},
\]
hence $\#K_n \leq e^{h(1+\epsilon)n}$.

\medskip

Observe that, since $\deg T_k = k$ and $\supp (\mu) \subset B(e,1)$,
the support of the function $T_k\left(\frac{1}{\rho}P_{\mu}\right)
\I_e$ is contained in the ball $B(e,k)$, and therefore is disjoint
from the support of the function $\I_{K_n}$ if $k <
\ell(1-\epsilon)n$. The identity~\eqref{decouplage} written with the
set $K_n$ then becomes
\begin{equation*}
\frac{1}{\rho^n}\mu^{*n}(K_n)=
  \sum_{\ell(1-\epsilon)n \leq k \leq n}\PP(\abs{S_n}=k)
          \left\langle T_k\left(\frac{1}{\rho}P_{\mu}\right) \I_e,\I_{K_n} \right\rangle.
\end{equation*}
Using Cauchy-Schwarz inequality and the item (2) of lemma~\ref{KL},
we obtain
\begin{align*}
\frac{1}{\rho^n}\mu^{*n}(K_n)
               & \leq \sum_{\ell(1-\epsilon)n \leq k \leq n}
                       \PP(\abs{S_n} = k)\norm{T_k\left(\frac{1}{\rho}
                       P_{\mu}\right)}\cdot\norm{\I_e}_2 \cdot \norm{\I_{K_n}}_2 \\
               & \leq  \PP(\abs{S_n} \geq \ell(1-\epsilon)n)\sqrt{\# K_n} \\
               & \leq  2\PP(S_n \geq \ell(1-\epsilon)n)\sqrt{e^{h(1+\epsilon)n}} .
\end{align*}
The item (3) of lemma~\ref{KL} yields
\[
\frac{1-\epsilon}{\rho^n} \leq \frac{1}{\rho^n}\mu^{*n}(K_n)
\leq 2\exp\bigg[-\frac{n}{2}A(\ell(1-\epsilon))+\frac{1}{2}h(1+\epsilon)n\bigg] .
\]
Taking the logarithm of both sides and dividing by $n$ gives
\[
\frac{\log(1-\epsilon)}{n} - \log\rho
        \leq \frac{\log 2}{n} -\frac{1}{2}A(\ell(1-\epsilon))+\frac{1}{2}h(1+\epsilon)
        .
\]
Letting $n\rightarrow +\infty$ and $\epsilon \rightarrow 0$ we get $-
\log\rho \leq -A(\ell)/2+h/2$. This concludes the direct proof of
Corollary~\ref{cor_chebyshev} for nearest-neighbor random walks. \qed

\begin{rmk}
Let $\mu$ be a symmetric probability measure on $\Gamma$ supported in
$B(e,1)$. Writing Equation~\eqref{decouplage} for $K=\{g\}$ and
following the above proof leads to
\[
\frac{1}{\rho^n}\mu^{*n}(g) =\sum_{\lgth{g}\leq k\leq n}\PP(\abs{S_n}=k)
          \biggl\langle T_k\biggl(\frac{1}{\rho}P_{\mu}\biggr) \I_e,\I_g \biggr\rangle
          \leq \PP(\abs{S_n}\geq \lgth{g}).
\]

Therefore we have
\[
\mu^{*n}(g)\leq 2\rho^n \PP(S_n\geq \lgth{g})
\leq 2\rho^n\exp \biggl[-\frac{n}{2}A\biggl(\frac{\lgth{g}}{n}\biggr)\biggr],
\]
which is L{\oe}uillot's upper bound for $\mu^{*n}(g)$ (see~\cite{Loe11}).
It is also possible to get \emph{lower} bounds for $\mu^{*n}(g)$
using lemma~\ref{KL}, see~\cite[Thm.~14.22]{Woe00}.
\end{rmk}

We can deduce other inequalities from Theorem~\ref{main_thm}, for
instance the following corollary (strengthening Ledrappier's
inequality $4(1-\rho)\leq h$).
\begin{cor}
\label{cor_ledrappier}
Let $A(x) = 2x \argth(x) + 4\sqrt{1-x^2} - 4 \geq 0$. Then any
symmetric random walk with finite second moment satisfies
  \begin{equation}
  \label{eq:cor_ledrappier}
  A(\ell/M_2(\mu)) + 4(1-\rho) \leq h.
  \end{equation}
\end{cor}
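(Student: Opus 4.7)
The plan is to mimic the proof of Corollary~\ref{cor_chebyshev}: combine both inequalities of Theorem~\ref{main_thm} through the common threshold $s = F^{-1}(h)$, where $F(x) = 2x\argth(x)$, and exploit an algebraic identity relating $A$ and $F$.

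First I would introduce $s = F^{-1}(h) \in [0,1)$, so that $F(s) = h$. From Theorem~\ref{main_thm} we have both $F(\tilde\ell) \leq h = F(s)$ and $F(\sqrt{1-\rho^2}) \leq h = F(s)$. Since $F$ is strictly increasing on $[0,1)$ (its Taylor series~\eqref{eq:def_F} at $0$ has nonnegative coefficients and it tends to $+\infty$ at $1$), this gives $\tilde\ell \leq s$ and $\sqrt{1-\rho^2} \leq s$, the latter rewriting as $\rho \geq \sqrt{1-s^2}$ and therefore $4(1-\rho) \leq 4(1 - \sqrt{1-s^2})$.

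Next I would observe the key algebraic identity
\begin{equation*}
A(s) + 4\bigl(1 - \sqrt{1-s^2}\bigr) = 2s\argth(s) + 4\sqrt{1-s^2} - 4 + 4 - 4\sqrt{1-s^2} = F(s) = h.
\end{equation*}
Hence the desired conclusion $A(\tilde\ell) + 4(1-\rho) \leq h$ will follow once I establish the monotonicity $A(\tilde\ell) \leq A(s)$. Combined with $A(0) = 0$, monotonicity of $A$ on $[0,1)$ also yields the nonnegativity $A \geq 0$ asserted in the statement.

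The main work is thus the monotonicity of $A$. I would set $u = \argth(x) \in [0,\infty)$, rewrite
\begin{equation*}
A(\tanh u) = 2u \tanh(u) + 4\operatorname{sech}(u) - 4,
\end{equation*}
and compute the derivative in $u$, which reduces after a short manipulation to showing that $g(u) \coloneqq u + \tfrac{1}{2}\sinh(2u) - 2\sinh(u) \geq 0$ for $u \geq 0$. This is elementary: $g(0) = 0$ and
\begin{equation*}
g'(u) = 1 + \cosh(2u) - 2\cosh(u) = 2\cosh(u)\bigl(\cosh(u) - 1\bigr) \geq 0.
\end{equation*}
So $g \geq 0$, hence $A$ is nondecreasing on $[0,1)$, which closes the argument. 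No real obstacle is expected; the only mildly delicate step is this monotonicity check, which is routine via the substitution $x = \tanh u$.
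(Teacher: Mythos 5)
Your proof is correct, and its overall skeleton is the one the paper uses: both parts of Theorem~\ref{main_thm} are funnelled through the threshold $s=F^{-1}(h)$, the identity $A(s)+4\bigl(1-\sqrt{1-s^2}\bigr)=F(s)$ does the bookkeeping, and everything reduces to the monotonicity of $A$ on $[0,1)$. The only place you genuinely diverge from the paper is in that monotonicity step. The paper expands $A$ in a Taylor series, $A(x)=2\sum_{n\geq 0}\bigl(\tfrac{1}{2n+1}-\tfrac{1}{4^n(n+1)}\binom{2n}{n}\bigr)x^{2n+2}$, and checks via the elementary bound $4^{-n}\binom{2n}{n}\leq 1/2$ (for $n\geq 1$) that all coefficients are nonnegative; you instead substitute $x=\tanh u$ and reduce to $g(u)=u+\tfrac12\sinh(2u)-2\sinh(u)\geq 0$, which follows from $g(0)=0$ and $g'(u)=2\cosh(u)(\cosh(u)-1)\geq 0$. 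Both verifications are routine and correct (I checked your derivative computation: $\frac{\dd}{\dd u}\bigl(2u\tanh u+4/\cosh u-4\bigr)=2g(u)/\cosh^2 u$). Your calculus argument is arguably more self-contained; the paper's series expansion gives slightly more for free, namely that $A$ is convex with vanishing $x^2$-coefficient, so $A(x)=\tfrac{1}{3}x^4+O(x^6)$ near $0$, which quantifies how the corollary degenerates for small drift. Either way the conclusion $A(\ell/M_2(\mu))+4(1-\rho)\leq h$ and the nonnegativity $A\geq 0$ follow as you state.
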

\begin{proof}
Let us first show that $A$ is increasing. Using the Taylor expansions
of $\argth(x)$ and $\sqrt{1-x}$, we have
\[
A(x) = 2\sum_{n=0}^{\infty}\biggl(\frac{1}{2n+1}-\frac{1}{4^n(n+1)}\binom{2n}{n}\biggr)x^{2n+2}.
\]
For $n\geq 1$, one may estimate $(2n)!$ by bounding each odd number
in the product by the even number following it. This gives $(2n)!
\leq 4^n (n!)^2$. Bounding only each odd number $>1$ by the even
number following it, we even get $(2n)! \leq 4^n (n!)^2 /2$, hence
$4^{-n} \binom{2n}{n} \leq 1/2$. Therefore all the coefficients of
the Taylor expansion of $A$ are nonnegative, and $A$ is increasing
(and nonnegative).

The proof of the inequality~\eqref{eq:cor_ledrappier} is then
completely similar to the proof of Corollary~\ref{cor_chebyshev}:
setting $B(x)=4x$, then $A(x)+B(1-\sqrt{1-x^2}) = F(x)$, which is the
algebraic property that played a role in this proof, implying that
$A(\tilde\ell)+B(1-\rho)\leq h$.
\end{proof}

\noindent \emph{Acknowledgements:} The authors gladly thank Bachir
Bekka for pointing out to them the references~\cite{Zim78}
and~\cite{Kuh94}.
\end{arxivversion}

\bibliography{biblioLower}
\bibliographystyle{amsalpha}
\end{document}